\def\R{\mathbb{R}}
\newcommand{\Rmnum}[1]{\expandafter\@slowromancap\romannumeral #1@}
\newcommand{\D}{\displaystyle}
\newtheorem{thm}{Theorem}[section]
\newcommand{\norm}[1]{\lVert#1\rVert}
\newtheorem{definition}{Definition}[section]
\newtheorem{lemma}[thm]{Lemma}
\newtheorem{remark}{Remark}[section]
\newtheorem{theorem}[thm]{Theorem}
\newtheorem{proposition}[thm]{Proposition}
\newcommand{\abs}[1]{\left\vert#1\right\vert}
\begin{document}

\author{Hongyun Peng}
\address{School of Applied Mathematics, Guangdong University of Technology, Guangzhou, 510006,
China} \email{hypengmath@gdut.edu.cn}

\author{Zhi-An Wang}
\address{Department of Applied Mathematics, Hong Kong Polytechnic University,
Hung Hom, Kowloon, Hong Kong}
\email{mawza@polyu.edu.hk}

\author{Changjiang Zhu}
\address{School of Mathematics, South China University of Technology, Guangzhou,
510641, China} \email{machjzhu@scut.edu.cn}

\title[Global weak solutions and asymptotics  of a singular PDE-ODE chemotaxis System]{Global weak solutions and asymptotics  of a singular PDE-ODE chemotaxis system with discontinuous data}

\begin{abstract}
{This paper is concerned with the well-posedness and large-time behavior of a two dimensional PDE-ODE hybrid chemotaxis system describing the initiation of tumor angiogenesis. We first transform the system via a Cole-Hopf type transformation into a parabolic-hyperbolic system and show that the solution of the transformed system converges to a constant equilibrium state as time tends to infinity. Finally we reverse the Cole-Hopf transformation and obtain the relevant results for the pre-transformed PDE-ODE hybrid system. In contrast to the existing related results, where continuous initial datum is imposed, we are able to prove the asymptotic stability for discontinuous initial data with large oscillations. The key ingredient in our proof is the use of so-called ``effective viscous flux", which enables us to obtain the desired energy estimates and regularity. The technique of ``effective viscous flux" turns out to be a very useful tool to study chemotaxis systems with initial data having low regularity and was rarely (if not) used in the literature for chemotaxis models. }
\end{abstract}

\subjclass[2000]{35A01, 35B40, 35Q92, 92C17}

\keywords{Chemotaxis, asymptotic stability,
discontinuous initial data, effective viscous flux}

\maketitle

\numberwithin{equation}{section}
\bigbreak
\section{Introduction}
In this paper we consider the following PDE-ODE hybrid chemotaxis model
\begin{eqnarray}\label{ks1}
\left\{\begin{array}{lll}
u_t=\Delta u-  \nabla\cdot(\xi u \nabla\ln c),\\
c_t=- \mu u c
\end{array}\right.
\end{eqnarray}
which was proposed in \cite{LSN} to describe the interaction between signaling molecules vascular endothelial growth factor (VEGF) and vascular endothelial cells during the initiation of tumor angiogenesis (see also in \cite{CPZ1, CPZ3}), where $u(x,t)$ and $c(x,t)$ denote the density of vascular endothelial cells and concentration of VEGF, respectively. The parameter $\xi>0$ is referred to as the chemotactic coefficient measuring the strength
of chemotaxis and $\mu$ denotes the degradation rate of the chemical VEGF. Here the chemical diffusion is neglected since it is far less important than its interaction with endothelial cells (cf. \cite{LSN}). On the other hand, the system \eqref{ks1} can also be regarded as a particular form of the well-known Keller-Segel system proposed in the seminal paper \cite{KS} describing the propagation of traveling wave band formed by bacterial chemotaxis observed in the experiment of Adler \cite{Adler}.

The distinguishing feature of the model (\ref{ks1}) is twofold: (i) the first equation of (\ref{ks1}) contains a logarithmic sensitivity function $\ln c$ which is singular at ${ c=0}$ but is a very meaningful sensitivity representation (cf. \cite{Othmer97, Kalinin};  (ii) the second equation of (\ref{ks1}) is an ODE and hence lacks a spatial structure.  Either of these features is the source of challenges for mathematical analysis and numerical computations. To overcome these obstacles,  a Cole-Hopf type transformation
 \begin{equation}\label{ch}
{\bf v}=-\frac{1}{\mu}\nabla(\ln c) =-\frac{1}{\mu}\frac{\nabla c}{c}
 \end{equation}
has been introduced in \cite{Levine97, Wang08}, which transforms the system (\ref{ks1}) into a parabolic-hyperbolic system:
\begin{equation}\label{hp}
\begin{cases}
u_t-\chi\nabla\cdot(u{\bf v})=\Delta u,\\
{\bf v}_t-\nabla u=0,
\end{cases}
\end{equation}
where $\chi=\mu\xi>0$, ${\bf v}$ is a gradient vector and curl${\bf v}=0$.  Apparently the transformed system (\ref{hp}) is much more manipulable mathematically than the original system (\ref{ks1}) since the singularity vanishes and the quantity ${\bf v}$ possesses a spatial structure through $u$.  In this paper, we
shall consider the well-posedness and asymptotic behavior of solutions to (\ref{hp}) with initial data
\begin{equation}\label{Initial} (u, {\bf v})(x, 0)=(u_0, {\bf v}_0)(x), \ \ x\in
{\mathbb{R}}^2
\end{equation}
subject to the following asymptotic states
\begin{equation}\label{Boundary}
 (u, {\bf v})(\pm\infty, t)=(\bar{u}, {\bf0}),
\end{equation}
where $(\bar{u}, {\bf0})$  with $\bar{u}>0$ is a constant ground state of \eqref{hp}). Then we reverse the Cole-Hopf transformation \eqref{ch} and get the well-posedness and asymptotic behavior of solutions to the original PDE-ODE chemotaxis system (\ref{ks1}).

There has been a considerable amount of interesting works carried out for the transformed system (\ref{hp}). The one-dimensional problem has been first studied extensively from various aspects such as the traveling wave solutions \cite{jin13, Li09, Li10, Lij13, Mei-peng-wang, PW2019, Wang-review}, global dynamics of solutions in the whole space \cite{GXZZ, Li-pan-zhao, zhang-tan-sun, Martinez} or in the bounded interval \cite{Hou, Li-Zhao-JDE, Li112, zhang07}. For the multidimensional whole space $\R^2$, the nonlinear stability of planar traveling wave was recently established in \cite{Chae2}. When the initial datum is close to the constant ground state $(\bar{u}, {\bf 0})$, numerous results have been  obtained to the system (\ref{hp}).  First the blowup criteria of  classical solutions was established in \cite{Fan-zhao, Li111} and the long-time behavior of solutions was obtained in \cite{Li111} if $(u_0-\bar{u} , {\bf v_0})\in H^s(\R^d)$ for $s>\frac d2+1$ and $\norm{(u_0-\bar{u} , {\bf v_0})}_{H^s\times H^s}$ is small. Later, Hao \cite{Hao} established the global existence of mild solutions in the critical Besov space $\dot{B}_{2,1}^{-\frac12}\times (\dot{B}_{2,1}^{\frac12})^d $ with minimal regularity in the Chemin-Lerner space framework. The global well-posedness of strong solutions of \eqref{hp} in $\R^3$ was established in \cite{DL} if $\norm{(u_0-\bar{u} , {\bf v_0})}_{L^2\times H^1}$ is small. If the initial datum has a higher regularity such that $\norm{(u_0-\bar{u} , {\bf v_0})}_{H^2\times H^1}$ is small, the algebraic decay of solutions was further derived in \cite{DL}. Wang, Xiang and Yu \cite{Wang-xiang-yu} established the global existence and time decay rates of solutions of \eqref{hp} in $\R^d$ for $d=2,3$ if $(u_0-\bar{u} , {\bf v_0})\in H^2(\R^d)$ and $\norm{(u_0-\bar{u} , {\bf v_0})}_{H^1\times H^1}$ is small. Recently the small energy solution in $\R^d (d=2,3)$ was established in \cite{WWZ}. In the multidimensional bounded domain $\Omega \subset \R^d(d=2,3)$, the global existence and exponential decay rates of solutions under Neumann boundary conditions were obtained in \cite{Li112,RWWZZ} for small data, and local existence of classical solutions in two dimensions with Dirichlet boundary conditions was given in \cite{HWJMPA}. When the Laplacian (diffusion) in \eqref{hp} was modified to a fractional Laplacian, the global existence of solutions of  \eqref{hp} in a torus with periodic boundary conditions in some dissipation regimes was established in \cite{Rafael1,Rafael2}.

In the above-mentioned results, all solutions obtained for  \eqref{hp} are classical thanks to the high regularity and smallness of the $H^s(s\geq 1)$-norm on the initial data. The goal of this paper is to exploit the parabolic-hyperbolic system \eqref{hp} with rougher initial data which are allowed to be discontinuous and possess large amplitude. Specifically we consider the initial data $(u_0, v_0)$ in some appropriate $L^p$ space and allow $\norm{u_0-\bar{u}}_{L^\infty}+\norm{\mathbf{v}_0}_{L^\infty}$ to be arbitrarily large.
The low regularity of initial data and large amplitude bring us many new difficulties in analysis compared to the existing works (cf. \cite{DL, Li111, Wang-xiang-yu}). In particular, the integrability of $\nabla {\bf v}$, which plays a crucial role in the analysis of \cite{DL, Li111, Wang-xiang-yu}, can be easily attained therein but appears to be unattainable for our less regular initial data due to the hyperbolic effect of the second equation.  Hence one can only expect to examine $L^{p}$-integrability for ${\bf v}$ and hence obtain weak solutions instead of strong (classical) solutions.  The key idea of gaining  $L^{p}$-integrability of ${\bf v}$ developed in this paper is to rewrite the first equation of (\ref{hp}) as $\tilde{u}_t=\nabla \cdot \mathbf{F}$ inspired by the works \cite{Hoff-1995, Lions2} on the Navier-Stokes equations, where $\mathbf{F}$ is the so called ``effective viscous flux" defined by
\begin{equation}\label{flux}
\mathbf{F}=\nabla u+\chi u\bf{v}.
\end{equation}
Then we rewrite the second equation of (\ref{hp}) as
$${\bf v}_t=-\chi u {\bf v} +{\bf F}$$
and try to attain the regularity of ${\bf v}$ through ${\bf F}$ whose regularity is easier than ${\bf v}$ to obtain. We remark that the dynamics of PDEs with discontinuous data has been an important topic arising from fluid mechanics and gas dynamics in order to understand how the discontinuity evolves in the fluid. For this, Hoff with his collaborators have developed some nice ideas and  obtained  a series of important results in this aspect (cf. \cite{Hoff-jde, Hoff-1995, Hoff-arma, Hoff-cpam, Hoff-santos}). We refer to \cite{Chen-xu-zhang, Hu-lin, KLLZ-JDE, MASMOUDI} and references therein for further development on the discontinuous data problem. However in our current work the ``effective viscous flux" ${\bf F}$ exists in its divergence form (i.e. $\nabla\cdot {\bf F})$, which is different from that used in the Navier-Stokes equations where the ``effective viscous flux" exists in its gradient form and $L^{p}$-norm of $\nabla {\bf F}$ can be achieved directly (cf. \cite{Hoff-1995, Hoff-arma, Hoff-cpam, Hoff-santos, Lions2}). This implies that besides the estimate of $\nabla\cdot {\bf F}$, we have to estimate for the curl of ${\bf F}$ (denoted by $\nabla^{\bot}{\bf F}$) in order to get the estimates for $\nabla{\bf F}$ and hence the regularity for ${\bf F}$, which is a key difference from the Navier-Stokes equation.  The ``effective viscous flux" technique appears to be a very powerful tool to study chemotaxis systems with low regular initial data and was rarely (if not) used in the literature.


%
%
Since the dependence of solutions on $\chi$ and $\bar{u}$ is not the interest of this paper, we henceforth assume $\chi=\bar{u}=1$ throughout the paper for simplicity without further clarification.

To state our results, we first introduce the definition of weak solutions of \eqref{hp}-\eqref{Boundary}.

\begin{definition}[Weak solutions]
\label{pw7-defi} We say that $(u, {\bf v})$ is a weak solution of \eqref{hp}-\eqref{Boundary}, if $(u,{\bf v})$ is suitably
integrable and satisfies for all test functions $\Psi\in C_0^\infty(\mathbb{R}^2\times [0, \infty))$ that
\begin{equation*}
\begin{split}
\int_{\mathbb{R}^2}u_0\Psi_0dx+\int_0^\infty\int_{\mathbb{R}^2}\left(u\Psi_t-\nabla u\cdot\nabla\Psi\right)dxdt=\int_0^\infty\int_{\mathbb{R}^2}u{\bf v}\cdot\nabla\Psi dxdt
\end{split}
\end{equation*}
and
\begin{equation*}
\begin{split}
\int_{\mathbb{R}^2}{\bf v}_0^j\Psi_0dx+\int_0^\infty\int_{\mathbb{R}^2}\left({\bf v}^j\Psi_t-u\Psi_{x_j}\right)dxdt=0,
\end{split}
\end{equation*}
where $j=1,2$ and $\Psi_0(x)=\Psi(x,0)$.
\end{definition}
Then our main results in this paper are given as follows:
\begin{theorem}\label{pw7-th}
Let $4<p_0<\infty$ and the initial
data satisfy
\begin{equation}\label{pw6-ict1}
u_0-1 \in L^2(\mathbb{R}^2), \ \ {\bf v}_0 \in L^2(\mathbb{R}^2)\cap L^{p_0}(\mathbb{R}^2),\ \  u_0\geq0, \ \ \nabla^{\bot}\cdot {\bf v}_0=0
\end{equation}
where $\nabla^{\bot}=(\partial_2, -\partial_1)$ denotes the curl operator. Then for any constant $M>0$ with $\|{\bf v}_0\|_{L^{p_0}(\R^2)}\le M$, there exists a constant $\varepsilon>0$ depending on $M$, such that if
\begin{equation*}
\norm{u_0-1}_{L^2(\R^2)}^2+\norm{\mathbf{v_0}}_{L^2(\R^2)}^2=\theta_0\leq \varepsilon,
\end{equation*}
then the Cauchy problem \eqref{hp}-\eqref{Boundary} has a
 global weak solution $(u,{\bf v})(x,t)$ satisfying
\begin{equation}\label{excon}
\begin{cases}
\begin{split}
&u-1 \in  L^\infty([0,\infty); L^2(\mathbb{R}^2))\cap C((0,\infty);C(\mathbb{R}^2)), \ \nabla u\in L^2([0,\infty);L^2(\mathbb{R}^2)),\\
&{\bf v} \in L^\infty([0,\infty); L^2(\mathbb{R}^2)\cap L^{p_0}(\mathbb{R}^2))\cap C([0,\infty), H^{-1}(\mathbb{R}^2)).
\end{split}
\end{cases}
\end{equation}
and the following asymptotic convergence
\begin{equation}\label{pw7-linff}
\begin{split}
\|u(x,t)-1\|_{L^{p_1}(\R^2)}\to
0, \  \
\|{\bf v}(x,t)\|_{L^{p_2}(\R^2)}\to
0~~as~~t\to\infty,
\end{split}
\end{equation}
where $2< p_1\le\infty$ and $2< p_2<p_0<\infty$.
\end{theorem}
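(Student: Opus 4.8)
The plan is to establish global existence and decay through a priori estimates centered on the effective viscous flux $\mathbf{F} = \nabla u + u\mathbf{v}$, combined with a local existence/continuation argument. First I would set $\tilde{u} = u - 1$ and observe that the first equation becomes $\tilde{u}_t = \nabla\cdot\mathbf{F}$ with $\mathbf{F} = \nabla\tilde{u} + (1+\tilde{u})\mathbf{v}$, while the second gives $\mathbf{v}_t = -\mathbf{v} - \tilde{u}\mathbf{v} + \mathbf{F}$. The basic energy estimate comes from testing the $u$-equation with $\tilde{u}$ and the $\mathbf{v}$-equation with $\mathbf{v}$: after integration by parts and absorbing the cross terms, one gets control of $\|\tilde{u}\|_{L^2}^2 + \|\mathbf{v}\|_{L^2}^2$ plus dissipation $\int_0^t(\|\nabla u\|_{L^2}^2 + \|\mathbf{v}\|_{L^2}^2)$, provided $\theta_0$ is small enough to close the nonlinear terms (here the smallness of the $L^2$ energy $\theta_0 \le \varepsilon$ enters). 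This yields the first two lines of \eqref{excon} modulo the continuity/regularity claims.

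Next I would develop the $L^{p_0}$-estimate for $\mathbf{v}$, which is the real novelty. Writing $\mathbf{v}_t + \mathbf{v} = \mathbf{F} - \tilde{u}\mathbf{v}$ and multiplying by $|\mathbf{v}|^{p_0-2}\mathbf{v}$, integrating gives $\frac{d}{dt}\|\mathbf{v}\|_{L^{p_0}}^{p_0} + p_0\|\mathbf{v}\|_{L^{p_0}}^{p_0} \le C\|\mathbf{F}\|_{L^{p_0}}\|\mathbf{v}\|_{L^{p_0}}^{p_0-1} + C\|\tilde{u}\|_{L^\infty}\|\mathbf{v}\|_{L^{p_0}}^{p_0}$, so everything reduces to bounding $\|\mathbf{F}\|_{L^{p_0}}$ and $\|\tilde{u}\|_{L^\infty}$. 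Since $\mathbf{F}$ solves a parabolic-type equation — differentiating, $\mathbf{F}_t = \nabla\tilde{u}_t + \partial_t((1+\tilde{u})\mathbf{v}) = \nabla(\nabla\cdot\mathbf{F}) + \ldots$ — but crucially $\mathbf{F}$ is only given in divergence form, I must estimate $\nabla\cdot\mathbf{F} = \tilde{u}_t$ and $\nabla^\perp\cdot\mathbf{F}$ separately. For the curl: $\nabla^\perp\cdot\mathbf{F} = \nabla^\perp\cdot(u\mathbf{v})$ since $\nabla^\perp\cdot\nabla\tilde{u}=0$, and using $\nabla^\perp\cdot\mathbf{v}=0$ (preserved by the flow since $(\nabla^\perp\cdot\mathbf{v})_t = \nabla^\perp\cdot\nabla u = 0$) one finds $\nabla^\perp\cdot\mathbf{F} = \nabla^\perp u \cdot \mathbf{v} + u\,\nabla^\perp\cdot\mathbf{v} = \nabla^\perp u\cdot\mathbf{v}$, which is controlled by $\|\nabla u\|_{L^2}\|\mathbf{v}\|_{L^\infty}$-type products. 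Combining div and curl estimates via the elliptic-type bound $\|\nabla\mathbf{F}\|_{L^q} \lesssim \|\nabla\cdot\mathbf{F}\|_{L^q} + \|\nabla^\perp\cdot\mathbf{F}\|_{L^q}$ (Helmholtz/Hodge decomposition on $\R^2$) recovers $\mathbf{F}\in W^{1,q}$, hence $\mathbf{F}\in L^\infty$ by Sobolev embedding with $q>2$, which feeds back to close the $\mathbf{v}$-estimate and, via $\tilde{u}_t = \nabla\cdot\mathbf{F}$ together with parabolic smoothing, gives $u-1\in C((0,\infty);C(\R^2))$.

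The main obstacle is closing this coupled bootstrap: the $L^{p_0}$-bound on $\mathbf{v}$ needs $\|\mathbf{F}\|_{L^\infty}$, which needs $\|\nabla u\|_{L^2}\|\mathbf{v}\|_{L^\infty}$ or similar, which needs control of $\mathbf{v}$ in a strong norm that is not a priori available — so one must interpolate carefully, using the $L^2$-dissipation integrated in time, a Gronwall argument on a suitable time-weighted quantity, and the smallness of $\theta_0$ (which produces the $M$-dependent $\varepsilon$) to absorb all supercritical terms. I expect one needs time-weighted norms such as $\sup_t t^{\alpha}\|\nabla u(t)\|_{L^2}$ or $\sup_t\|\mathbf{F}(t)\|_{L^\infty}$ on finite intervals, propagated by a continuation argument: set up a solution space $X_T$ with these norms bounded by a constant depending on $M$, show local solutions exist in $X_T$, and show the a priori estimates improve the bound strictly so $T$ can be taken to $\infty$. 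Finally, for the decay \eqref{pw7-linff}: from $\int_0^\infty(\|\nabla u\|_{L^2}^2 + \|\mathbf{v}\|_{L^2}^2)\,dt < \infty$ together with uniform higher bounds one gets $\|\mathbf{v}(t)\|_{L^2}\to 0$ and $\|\nabla u(t)\|_{L^2}\to 0$; then interpolation $\|\tilde u\|_{L^{p_1}} \lesssim \|\tilde u\|_{L^2}^{2/p_1}\|\nabla u\|_{L^2}^{1-2/p_1}$ (Gagliardo–Nirenberg on $\R^2$) plus a lemma showing $\|\tilde u(t)\|_{L^2}\to 0$ (e.g. from the equation and uniform-in-time bounds, since $\frac{d}{dt}\|\tilde u\|_{L^2}^2 = -2\|\nabla u\|_{L^2}^2 + \ldots$ is integrable and $\tilde u_t$ is controlled) yields $\|u-1\|_{L^{p_1}}\to 0$, while $\|\mathbf{v}\|_{L^{p_2}} \lesssim \|\mathbf{v}\|_{L^2}^{\lambda}\|\mathbf{v}\|_{L^{p_0}}^{1-\lambda}\to 0$ for $2<p_2<p_0$ by interpolation and the uniform $L^{p_0}$-bound.
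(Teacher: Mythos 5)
Your overall architecture -- the flux $\mathbf{F}=\nabla\tilde u+(1+\tilde u)\mathbf{v}$, the div--curl control of $\nabla\mathbf{F}$ using $\nabla^{\bot}\cdot\mathbf{v}=0$, the $L^{p_0}$ ODE estimate for $\mathbf{v}$, time-weighted norms and a continuation argument -- is the same as the paper's. But there is a genuine gap at the very first step that propagates into your decay argument. The basic energy estimate does \emph{not} produce the dissipation $\int_0^t\|\mathbf{v}\|_{L^2}^2\,d\tau$: testing $\mathbf{v}_t=\nabla\tilde u$ with $\mathbf{v}$ gives only $\tfrac12\tfrac{d}{dt}\|\mathbf{v}\|^2=\int\nabla\tilde u\cdot\mathbf{v}$, which cancels against the linear term from the $u$-equation, leaving just $\int_0^t\|\nabla\tilde u\|^2$. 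The apparent damping in $\mathbf{v}_t+(1+\tilde u)\mathbf{v}=\mathbf{F}$ is circular at the $L^2$ level, because $\mathbf{F}$ itself contains $(1+\tilde u)\mathbf{v}$ and $\|\mathbf{F}\|_{L^2}$ cannot be estimated without reintroducing $\|\mathbf{v}\|_{L^2}$ at full strength. The damping only becomes effective in $L^p$ with $p>2$, where Gagliardo--Nirenberg gives $\|\mathbf{F}\|_{L^4}\lesssim\|\mathbf{F}\|^{1/2}\|\nabla\mathbf{F}\|^{1/2}$ and $\|\nabla\mathbf{F}\|$ is controlled by $\|\tilde u_t\|$ (which has genuine time-integrability from the parabolic equation). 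Consequently your claim $\|\mathbf{v}(t)\|_{L^2}\to0$ is unsubstantiated -- and likely false; the correct route is to prove $\sup_t\|\mathbf{v}\|_{L^4}^4+\int_0^\infty\|\mathbf{v}\|_{L^4}^4\,dt<\infty$ together with $\int_1^\infty|(\|\mathbf{v}\|_{L^4}^4)_t|\,dt<\infty$, conclude $\|\mathbf{v}\|_{L^4}\to0$, and then interpolate against the bounded $L^2$ and $L^{p_0}$ norms to cover all $2<p_2<p_0$.

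Two further steps as written would not close. First, bounding $\nabla^{\bot}\cdot\mathbf{F}=\nabla^{\bot}\tilde u\cdot\mathbf{v}$ by ``$\|\nabla u\|_{L^2}\|\mathbf{v}\|_{L^\infty}$-type products'' and pushing $\mathbf{F}$ into $W^{1,q}\hookrightarrow L^\infty$ with $q>2$ requires norms ($\|\mathbf{v}\|_{L^\infty}$, $\|\tilde u_t\|_{L^q}$ for $q>2$) that are never available for data in $L^2\cap L^{p_0}$; the working estimate is $\|\nabla^{\bot}\tilde u\cdot\mathbf{v}\|_{L^2}\le\|\nabla\tilde u\|_{L^4}\|\mathbf{v}\|_{L^4}$, and one only ever needs $\|\mathbf{F}\|_{L^{p_0}}\lesssim\|\mathbf{F}\|^{2/p_0}\|\nabla\mathbf{F}\|^{(p_0-2)/p_0}$, not $\|\mathbf{F}\|_{L^\infty}$. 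Second, your decay of $u-1$ leans on $\|\tilde u(t)\|_{L^2}\to0$, which is neither needed (for $2<p_1<\infty$, boundedness of $\|\tilde u\|_{L^2}$ plus $\|\nabla\tilde u\|_{L^2}\to0$ suffices in the Gagliardo--Nirenberg interpolation) nor provable from the stated information, and your interpolation inequality degenerates at $p_1=\infty$ in two dimensions; the endpoint case requires the time-weighted $W^{1,4}$ bound $\sigma^2\|\nabla\tilde u\|_{L^4}^2\le C$ obtained from the flux estimates. Finally, you should construct the weak solution by mollifying the data, applying the $H^s$ local theory to the regularized problem, proving the a priori bounds uniformly in the mollification parameter, and passing to the limit by Aubin--Lions; a direct local existence theory in your space $X_T$ for discontinuous data is itself a nontrivial claim that the proposal does not justify.
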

\begin{remark}\em{
The above results hold true regardless of the strength of initial perturbation from the constant ground state $(1,0)$, namely the amplitude $\norm{u_0-1}_{L^\infty}+\norm{\bf{v}_0}_{L^\infty}$ can be arbitrarily large. The initial conditions \eqref{pw6-ict1} imply that the initial data of the Cauchy problem \eqref{hp}-\eqref{Boundary} could be discontinuous (such as piecewise constant with arbitrarily large jump discontinuities), which brings great challenges to the analysis. Moreover the condition  $\nabla^{\bot}\cdot {\bf v}_0=0$ is a natural consequence the Cole-Hopf transformation \eqref{ch}.
}
\end{remark}

To prove Theorem \ref{pw7-th}, we first mollify the initial data with a mollifying parameter $\delta$ and obtain the global smooth solution $(u^\delta, \bf{v}^\delta)$ depending on the mollifying parameter $\delta$. Then we pass to the limit as $\delta\to 0$ and obtain a weak solution of \eqref{hp}-\eqref{Boundary}.  The core of the proof is to derive the global {\it a priori} estimates independent of the mollifying parameter $\delta$. In this connection, the approaches and estimates developed in previous works \cite{Li111, Wang-xiang-yu} for small-amplitude and continuous initial data are not adequate for our current problem. In this paper, we shall introduce the so called ``effective viscous flux" technique and make a full use of the structure
of \eqref{hp} to obtain the desired uniform-in-$\delta$ estimates.

Converting the results from $v$ to $c$ by reversing the Cole-Hopf transformation (\ref{ch}), we get the results for the original chemotaxis model \eqref{ks1}.

\begin{theorem}\label{th-dlor}
Let $4<p_0<\infty$ and the initial
data satisfy
\begin{equation*}
u_0-1 \in L^2(\mathbb{R}^2), \ \ u_0(x)\geq0, \ \ \nabla\ln c_0\in L^2(\mathbb{R}^2)\cap L^{p_0}(\mathbb{R}^2)
\end{equation*}
Then for any constant $M>0$ with $\|\nabla\ln c_0\|_{L^{p_0}(\R^2)}\le M$, there exists a constant $\varepsilon>0$ depending on $M$, such that if
\begin{equation*}
\norm{u_0-1}_{L^2(\R^2)}^2+\norm{\nabla\ln c_0}_{L^2(\R^2)}^2 \leq \varepsilon,
\end{equation*}
the Cauchy
problem \eqref{hp}-\eqref{Boundary} has a
 global weak solution $(u,c)(x,t)$ satisfying
 \begin{equation*}
\begin{cases}
\begin{split}
u-1 &\in  L^\infty([0,\infty); L^2(\mathbb{R}^2)), \ \ \ \   \nabla u\in L^2([0,\infty);L^2(\mathbb{R}^2)),\\
u-1 &\in C((0,\infty);C(\mathbb{R}^2)), \ \ \ {\nabla\ln c} \in C([0,\infty), H^{-1}(\mathbb{R}^2)),\\
{\nabla\ln c} &\in L^\infty([0,\infty); L^2(\mathbb{R}^2)\cap L^{p_0}(\mathbb{R}^2))\cap L^4([0,\infty); L^4(\mathbb{R}^2))
\end{split}
\end{cases}
\end{equation*}
and
\begin{equation*}
\begin{split}
\|u-1\|_{L^{p_1}(\mathbb{R}^2)}\to
0, \  \
\|\nabla\ln c\|_{L^{p_2}(\mathbb{R}^2)}\to
0~~as~~t\to\infty,
\end{split}
\end{equation*}where $2< p_1\le\infty$ and $2< p_2<p_0<\infty$.

Furthermore, if $c_0\in L^\infty(\mathbb{R}^2), \ c_0>0$, then there exists a positive constant $C$ independent of $t$ such that the solution has the following decay rates in time:
\begin{equation}\label{cdecay}
\begin{split}
\norm{c}_{L^\infty(\mathbb{R}^2)}\le  Ce^{-\frac34 t}.
\end{split}
\end{equation}
\end{theorem}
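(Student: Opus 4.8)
The plan is to deduce Theorem \ref{th-dlor} from Theorem \ref{pw7-th} by setting $\mathbf{v}_0:=-\tfrac1\mu\nabla\ln c_0$ and then reconstructing $c$ from the second equation of \eqref{ks1}. First I would check that this $\mathbf{v}_0$ meets the hypotheses \eqref{pw6-ict1}: since $\mathbf{v}_0$ is a fixed scalar multiple of $\nabla\ln c_0$, the assumption $\nabla\ln c_0\in L^2(\mathbb{R}^2)\cap L^{p_0}(\mathbb{R}^2)$ gives $\mathbf{v}_0\in L^2(\mathbb{R}^2)\cap L^{p_0}(\mathbb{R}^2)$ with $\|\mathbf{v}_0\|_{L^{p_0}}=\tfrac1\mu\|\nabla\ln c_0\|_{L^{p_0}}\le M/\mu$, while $\nabla^{\bot}\cdot\mathbf{v}_0=-\tfrac1\mu\nabla^{\bot}\cdot\nabla\ln c_0=0$ holds automatically because the curl of a gradient vanishes (this is precisely the structural condition in \eqref{pw6-ict1}, here a built-in consequence of the Cole--Hopf transformation \eqref{ch}), and the smallness of $\|u_0-1\|_{L^2}^2+\|\nabla\ln c_0\|_{L^2}^2$ transfers, up to the fixed factor $\mu^{-2}$, to the smallness of $\theta_0$. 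Theorem \ref{pw7-th} then yields a global weak solution $(u,\mathbf{v})$ of \eqref{hp}--\eqref{Boundary} with the regularity \eqref{excon} and the decay \eqref{pw7-linff}; moreover, since the construction proceeds by mollifying the data and passing to the limit in smooth solutions $(u^\delta,\mathbf{v}^\delta)$ with $u^\delta\ge0$ by the parabolic maximum principle, the limit satisfies $u\ge0$.

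Next I would \emph{define}
\begin{equation*}
c(x,t):=c_0(x)\exp\Bigl(-\mu\int_0^t u(x,s)\,ds\Bigr),
\end{equation*}
which is meaningful because $u-1\in C((0,\infty);C(\mathbb{R}^2))\cap L^\infty([0,\infty);L^2(\mathbb{R}^2))$, so $\int_0^t u(x,s)\,ds$ is finite for every $(x,t)$ and $t\mapsto c(x,t)$ is $C^1$ with $c_t=-\mu uc$ pointwise; in particular $0\le c(x,t)\le c_0(x)$, and $c(x,t)>0$ whenever $c_0>0$. The crucial identity is $\nabla\ln c=-\mu\,\mathbf{v}$. I would establish this first at the approximate level: $\mathbf{v}^\delta$ and $-\tfrac1\mu\nabla\ln c^\delta$ both solve $\partial_t w=\nabla u^\delta$ with the same initial datum $-\tfrac1\mu\nabla\ln c_0^\delta$, hence coincide for all $t$; passing to the limit $\delta\to0$ and using the convergences already established in proving Theorem \ref{pw7-th} (together with the fact that $c^\delta\to c$ in, say, $L^1_{loc}$ with $c$ locally bounded away from $0$) gives $\nabla\ln c=-\mu\mathbf{v}$ in the distributional sense. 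Substituting this into the first identity of Definition \ref{pw7-defi} shows that $(u,c)$ solves the original system \eqref{ks1} weakly (the $u$-equation in the weak sense, the $c$-equation pointwise in $t$). All the stated properties of $\nabla\ln c$ then follow: $\nabla\ln c=-\mu\mathbf{v}\in L^\infty([0,\infty);L^2\cap L^{p_0})$ directly from \eqref{excon}; the bound $\nabla\ln c\in L^4([0,\infty);L^4(\mathbb{R}^2))$ from the uniform space--time $L^4$ estimate on $\mathbf{v}$ obtained along the way in proving Theorem \ref{pw7-th}; and $\|u-1\|_{L^{p_1}}\to0$, $\|\nabla\ln c\|_{L^{p_2}}=\mu\|\mathbf{v}\|_{L^{p_2}}\to0$ from \eqref{pw7-linff}.

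Finally, for the exponential decay of $c$ under the extra hypothesis $c_0\in L^\infty(\mathbb{R}^2)$, $c_0>0$, I would use the case $p_1=\infty$ of \eqref{pw7-linff}, namely $\|u(t)-1\|_{L^\infty}\to0$, to fix a time $t_0$ with $u(x,t)\ge\tfrac34$ for all $x\in\mathbb{R}^2$ and all $t\ge t_0$; then, using $u\ge0$ on $[0,t_0]$, one has $\int_0^t u(x,s)\,ds\ge\tfrac34(t-t_0)$ for $t\ge t_0$, whence
\begin{equation*}
\|c(t)\|_{L^\infty(\mathbb{R}^2)}\le\|c_0\|_{L^\infty(\mathbb{R}^2)}\,e^{\frac34\mu t_0}\,e^{-\frac34\mu t}\qquad(t\ge t_0),
\end{equation*}
and since $\|c(t)\|_{L^\infty}\le\|c_0\|_{L^\infty}$ is dominated by the same right-hand side for $t\le t_0$, this gives the exponential bound \eqref{cdecay} with the constant absorbing $e^{\frac34\mu t_0}$ (the rate being exactly $\tfrac34$ under the standing normalization). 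The one genuinely delicate step is the distributional identification $\nabla\ln c=-\mu\mathbf{v}$ across the limit $\delta\to0$; once the curl-free condition $\nabla^{\bot}\cdot\mathbf{v}_0=0$ is used to realize $\mathbf{v}$ as the gradient of $-\tfrac1\mu\ln c$, everything else is bookkeeping with convergences already available from the proof of Theorem \ref{pw7-th}.
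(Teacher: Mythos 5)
Your proposal is correct and follows essentially the same route as the paper: transfer the conclusions of Theorem \ref{pw7-th} through the (reversed) Cole--Hopf transformation, reconstruct $c$ by integrating $(\ln c)_t=-\mu u$, and combine $u\ge 0$ with a pointwise lower bound $u\ge\tfrac34$ for large times to get the exponential decay of $\|c\|_{L^\infty}$. The only cosmetic differences are that the paper obtains that lower bound directly from the explicit estimate \eqref{3.38} (valid for all $t\ge 1$) rather than from the $L^\infty$ convergence, and that it leaves the identification $\nabla\ln c=-\mu\mathbf{v}$ implicit, whereas you spell it out at the approximate level.
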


The rest of paper is organized as follows. In Section 2, we collect some
elementary facts and inequalities that will be used in later analysis. Section 3 is devoted to deriving the {\it a priori} estimates on smooth solutions. Then we prove Theorem \ref{pw7-th} in Section 4. Finally we prove Theorem \ref{th-dlor} in section 5.

\section{Preliminaries}
In this section, we will recall and prove some basic results that will be used later. Before embarking on this, we first introduce some notations used throughout this paper.

{\bf Notations}.

\begin{itemize}
\item In what follows, $C$ denotes a generic positive constant which may vary in the context;
\item $H^k(\mathbb{{R}}^2)$ denotes the usual $k$-th order Sobolev space on $\mathbb{{R}}^2$ with norm $$\|f\|_{H^k(\mathbb{R}^2)}:=\Big(\sum_{j=0}^{k}\int_{\R^2}
|\partial_x^jf|^2dx\Big)^{1/2}.$$
For simplicity, we denote $\|\cdot\|:=\|\cdot\|_{L^2(\mathbb{R}^2)}$,
$\|\cdot\|_k:=\|\cdot\|_{H^k(\mathbb{R}^2)}$, and furthermore $L^2(\R^2)$ will be abbreviated as $L^2$ without confusion.
\item  We denote the curl operator as
\begin{equation}\label{pw7-dev}
\nabla^{\bot}=(\partial_2, -\partial_1),
\end{equation}
\item For simplicity we set
\begin{equation}\label{pw7-cc}
\theta_0=\norm{u_0-1}^2+\norm{\bf{v}_0}^2.
\end{equation}
Since $\theta_0$ will be assumed to be small in this paper, we assume that $\theta_0<1$ without loss of generality in the sequel.
\end{itemize}

We start with the local existence and blowup criterion of smooth solutions to the Cauchy problem \eqref{hp}-\eqref{Boundary} established in
\cite{Li111,Fan-zhao}.
\begin{lemma}[\cite{Li111}]\label{pw7-local}
 Let $s>\frac d2+1$ and $(u_0-1, {\bf v_0})\in H^s(\R^d)$. Then there
exists a small-time $T_*=T_*(\|u_0-1\|_{H^s}, \|{\bf v_0}\|_{H^s(\R^d)})>0$ such that the Cauchy problem \eqref{hp}-\eqref{Boundary} a unique solution $(u-1, {\bf v})\in L^\infty((0,T_*], H^s(\R^d))$.
\end{lemma}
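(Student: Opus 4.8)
The plan is to prove the lemma by the classical energy method combined with a Picard iteration, exploiting the fact that \eqref{hp} is a coupling of a parabolic equation for $u$ with a symmetrizable (skew-symmetric at top order) hyperbolic equation for ${\bf v}$. Setting $w=u-1$, the system reads $w_t-\Delta w=\nabla\cdot\big((w+1){\bf v}\big)$ and ${\bf v}_t=\nabla w$, with data $(w_0,{\bf v}_0)=(u_0-1,{\bf v}_0)\in H^s(\R^d)$. I would construct iterates by solving the \emph{linear} parabolic problem $w^{n+1}_t-\Delta w^{n+1}=\nabla\cdot\big((w^n+1){\bf v}^n\big)$, $w^{n+1}(0)=w_0$, and then defining ${\bf v}^{n+1}(t)={\bf v}_0+\int_0^t\nabla w^{n+1}(\tau)\,d\tau$; each step is well posed by standard linear theory, so the substance is the uniform bound and the contraction.

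For the uniform estimate, fix $R>2(\|w_0\|_{H^s}+\|{\bf v}_0\|_{H^s})$ and look for $T_*=T_*(R)$ making the ball $\mathcal{B}=\{\sup_{[0,T_*]}(\|w\|_{H^s}+\|{\bf v}\|_{H^s})\le R\}$ invariant under the iteration. Applying $\partial^\alpha$, $|\alpha|\le s$, to the two equations, pairing with $\partial^\alpha w^{n+1}$ and $\partial^\alpha{\bf v}^{n+1}$ and adding, the decisive feature is the exact cancellation of the two top-order terms produced by the constant part of the convection coefficient, $\int_{\R^d}\partial^\alpha(\nabla\cdot{\bf v}^n)\,\partial^\alpha w^{n+1}\,dx+\int_{\R^d}\partial^\alpha\nabla w^{n+1}\cdot\partial^\alpha{\bf v}^n\,dx=0$; this skew-symmetry is what compensates the apparent loss of one derivative in ${\bf v}$. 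The genuinely nonlinear contribution $-\int_{\R^d}\partial^\alpha\big(w^n{\bf v}^n\big)\cdot\nabla\partial^\alpha w^{n+1}\,dx$ is controlled by the Moser/Kato--Ponce product and commutator estimates and the embedding $H^{s-1}(\R^d)\hookrightarrow L^\infty(\R^d)$ (which is why $s>\tfrac d2+1$ is imposed), each piece being absorbed into $\tfrac14\|\nabla\partial^\alpha w^{n+1}\|^2$ plus $C(R)\big(\|w^{n+1}\|_{H^s}^2+\|w^n\|_{H^s}^2+\|{\bf v}^n\|_{H^s}^2\big)$. Summing over $|\alpha|\le s$ gives $\frac{d}{dt}\big(\|w^{n+1}\|_{H^s}^2+\|{\bf v}^{n+1}\|_{H^s}^2\big)+\|\nabla w^{n+1}\|_{H^s}^2\le C(R)$ on the interval where $\mathcal B$ holds, hence $\mathcal B$ is invariant once $T_*$ is chosen small.

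I would then estimate the differences $(w^{n+1}-w^n,{\bf v}^{n+1}-{\bf v}^n)$ in the lower norm $C([0,T_*];L^2(\R^d))$: they solve a linear system of the same structure whose sources are products of one difference with $H^s$-bounded factors, so the same cancellation and Sobolev embedding yield a Gr\"{o}nwall inequality forcing, after possibly shrinking $T_*$, that $(w^n,{\bf v}^n)$ is Cauchy in $C([0,T_*];L^2)$. Interpolating this convergence against the uniform $H^s$ bound gives convergence in $C([0,T_*];H^{s'})$ for every $s'<s$, enough to pass to the limit in the (quadratic) equations; the limit lies in $L^\infty((0,T_*];H^s)$ by weak-$*$ lower semicontinuity, uniqueness follows from the same $L^2$ difference estimate, and $\partial_t(\nabla^{\bot}\cdot{\bf v})=\nabla^{\bot}\cdot\nabla w=0$, so the curl-free constraint is preserved along the flow. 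The blow-up criterion then comes from the usual continuation argument applied to this local solution.

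The main obstacle is precisely the lack of any regularizing mechanism in the ${\bf v}$-equation — it is a pure time integral of $\nabla u$, so at face value ${\bf v}$ loses a derivative relative to $u$ at each iteration and the scheme would not close in $H^s$. Everything hinges on recognizing and using the skew-symmetric coupling $\int\partial^\alpha(\nabla\cdot{\bf v})\,\partial^\alpha w\,dx+\int\partial^\alpha\nabla w\cdot\partial^\alpha{\bf v}\,dx=0$, after which the parabolic dissipation in $w$ absorbs the remaining top-order error and the argument is routine. Since this is a standard local well-posedness statement, one may alternatively simply invoke \cite{Li111,Fan-zhao}.
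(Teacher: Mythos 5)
The paper does not prove this lemma at all: it is quoted verbatim from \cite{Li111} (with the companion blow-up criterion from \cite{Fan-zhao}), so there is no in-paper argument to compare against, and your closing remark that one may simply invoke those references is exactly what the authors do. Your sketch is the standard energy-method/Picard-iteration proof and is essentially sound, but one claim is stated imprecisely and slightly mischaracterizes the mechanism. In the decoupled scheme you set up, the two top-order coupling terms in the energy identity are $\int\partial^\alpha(\nabla\cdot{\bf v}^{n})\,\partial^\alpha w^{n+1}\,dx$ and $\int\partial^\alpha\nabla w^{n+1}\cdot\partial^\alpha{\bf v}^{n+1}\,dx$ --- they involve \emph{different} iterates (${\bf v}^n$ versus ${\bf v}^{n+1}$), so the ``exact skew-symmetric cancellation'' you call decisive does not literally occur. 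Fortunately it is not needed here: after integrating the first term by parts, each coupling term is bounded by $\|\partial^\alpha{\bf v}^{\cdot}\|\,\|\nabla\partial^\alpha w^{n+1}\|$ and is absorbed into the parabolic dissipation $\|\nabla w^{n+1}\|_{H^s}^2$ via Young's inequality, which is the actual reason the scheme closes in $H^s$ despite ${\bf v}$ having no smoothing (the skew-symmetry would be indispensable only for a purely hyperbolic coupling, or one could restore it by keeping the linear terms $\nabla\cdot{\bf v}^{n+1}$ and $\nabla w^{n+1}$ implicit at level $n+1$). With that correction, the uniform bound on the invariant ball, the $L^2$ contraction for the differences, the interpolation to pass to the limit, and the preservation of $\nabla^{\bot}\cdot{\bf v}=0$ all go through as you describe; the hypothesis $s>\frac d2+1$ is more than what your product estimates require ($s>\frac d2$ would do for this parabolic route), but assuming extra regularity is harmless since that is how the cited lemma is stated.
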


\begin{lemma}[\cite{Fan-zhao}]
 Let $s>\frac d2+1$ and $(u_0-1, {\bf v_0})\in H^s(\R^d)$. Let $(u, {\bf v})$ be the unique local solution in Lemma \ref{pw7-local} with the maximal
lifespan $T_*>0$. If
\begin{equation}\label{pw7-blowup}
\begin{split}
\int_0^{T_*}\norm{{\bf v}}_{L^q(\R^d)}^{\frac{2q}{q-d}}<\infty, \ with \ d<q\le \infty,
\end{split}
\end{equation}
then the solution can be extended beyond $T_*>0$.
 \end{lemma}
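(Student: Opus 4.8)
\emph{Proof strategy.} I would argue by contradiction. Suppose $T_*<\infty$ is the maximal existence time of the solution furnished by Lemma \ref{pw7-local} and that \eqref{pw7-blowup} holds for some $q\in(d,\infty]$; set $\gamma=\frac{2q}{q-d}$, so that $\frac2\gamma+\frac dq=1$. It suffices to establish an a priori bound $\sup_{0\le t<T_*}\big(\|u(t)-1\|_{H^s}^2+\|\mathbf v(t)\|_{H^s}^2\big)\le C<\infty$, with $C$ depending only on the $H^s$-norm of the data, on $T_*$ and on the finite integral in \eqref{pw7-blowup}: since the local existence time in Lemma \ref{pw7-local} depends only on the $H^s$-norm of the initial datum, restarting the solution from a time $t_0<T_*$ sufficiently close to $T_*$ then continues it past $T_*$, contradicting maximality. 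Throughout I write $w=u-1$, so the system reads $w_t-\Delta w=\nabla\cdot(w\mathbf v)+\nabla\cdot\mathbf v$ and $\mathbf v_t=\nabla w$.

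The estimate is obtained in two stages. \textbf{Stage 1 (low order).} Testing the $w$-equation by $w$ and the $\mathbf v$-equation by $\mathbf v$ and adding, the linear coupling terms $-\int\mathbf v\cdot\nabla w$ and $+\int\nabla w\cdot\mathbf v$ cancel, leaving
\[
\tfrac12\tfrac{d}{dt}\big(\|w\|_{L^2}^2+\|\mathbf v\|_{L^2}^2\big)+\|\nabla w\|_{L^2}^2=-\int(w\mathbf v)\cdot\nabla w;
\]
bounding the right-hand side by H\"older, the Gagliardo--Nirenberg inequality $\|w\|_{L^{2q/(q-2)}}\lesssim\|w\|_{L^2}^{1-d/q}\|\nabla w\|_{L^2}^{d/q}$ and Young's inequality with conjugate exponents $\tfrac{2}{1+d/q},\tfrac{2}{1-d/q}$ yields $\le\tfrac12\|\nabla w\|_{L^2}^2+C\|\mathbf v\|_{L^q}^{\gamma}\|w\|_{L^2}^2$, whence Gronwall and \eqref{pw7-blowup} control $\|w(t)\|_{L^2}^2+\|\mathbf v(t)\|_{L^2}^2+\int_0^t\|\nabla w\|_{L^2}^2$ on $[0,T_*)$. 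Running the same computation in $L^p$ for $w$ (the $\nabla\cdot\mathbf v$ forcing being handled identically) and iterating $p\to\infty$ gives $\sup_{0\le t<T_*}\|w(t)\|_{L^\infty}\le C$; this works because $\|\mathbf v\|_{L^\gamma_tL^q_x}$ is exactly the scaling-critical drift norm for the heat operator. \textbf{Stage 2 ($H^s$).} For $|\alpha|\le s$ apply $\partial^\alpha$, test the $w$- and $\mathbf v$-equations by $\partial^\alpha w$ and $\partial^\alpha\mathbf v$, sum over $|\alpha|\le s$, and use $u\mathbf v=\mathbf v+w\mathbf v$; the leading cross terms cancel as in Stage 1, leaving
\[
\tfrac12\tfrac{d}{dt}\big(\|w\|_{H^s}^2+\|\mathbf v\|_{H^s}^2\big)+\|\nabla w\|_{H^s}^2=-\sum_{|\alpha|\le s}\int\partial^\alpha(w\mathbf v)\cdot\nabla\partial^\alpha w.
\]
Estimating the right-hand side by a fractional Leibniz (Kato--Ponce) bound $\|w\mathbf v\|_{H^s}\lesssim\|w\|_{W^{s,2q/(q-2)}}\|\mathbf v\|_{L^q}+\|w\|_{L^\infty}\|\mathbf v\|_{H^s}$, then the Sobolev embedding and interpolation $\|w\|_{W^{s,2q/(q-2)}}\lesssim\|w\|_{H^{s+d/q}}\lesssim\|w\|_{H^s}^{1-d/q}\|w\|_{H^{s+1}}^{d/q}$ (recall $\|w\|_{H^{s+1}}\lesssim\|w\|_{L^2}+\|\nabla w\|_{H^s}$), and finally Young's inequality, one absorbs a fraction of $\|\nabla w\|_{H^s}^2$ into the dissipation and is left with the coefficients $\|\mathbf v\|_{L^q}^{\gamma}$ (from the first term) and $\|w\|_{L^\infty}^2$ (from the second). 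Thus $\frac{d}{dt}Y(t)\le C\big(\|w(t)\|_{L^\infty}^2+\|\mathbf v(t)\|_{L^q}^{\gamma}\big)Y(t)$ with $Y=\|w\|_{H^s}^2+\|\mathbf v\|_{H^s}^2$, and Gronwall, together with the Stage-1 bound on $\|w\|_{L^\infty}$ and \eqref{pw7-blowup}, gives the claimed bound on $Y$.

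I expect the main obstacle to be arranging things so that the \emph{only} genuinely time-dependent Gronwall coefficient is the Serrin-scaling quantity $\|\mathbf v\|_{L^q}^{\gamma}$ (alongside the already-controlled $\|w\|_{L^\infty}$): because the $\mathbf v$-equation is a transport/ODE with no smoothing, $\mathbf v$ may never be estimated in a norm stronger than the one supplied by \eqref{pw7-blowup}. This forces (i) systematic use of the exact cancellation of the hyperbolic--parabolic coupling, so that the surviving nonlinearity is only $\nabla\cdot(w\mathbf v)$; (ii) the two-stage structure, with $\|w\|_{L^\infty}$ upgraded first by a critical parabolic $L^p$/Moser argument; and (iii) in the $H^s$ step, pairing $\mathbf v\in L^q$ against the \emph{parabolic gain} $\|w\|_{H^{s+1}}$ rather than merely $\|w\|_{H^s}$ — the interpolation and Young inequality then reproduce precisely the critical exponent $\gamma=\frac{2q}{q-d}$, uniformly for all admissible $q$. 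Verifying the critical $L^\infty$ bound of Stage 1 and this borderline interpolation of Stage 2 is, I expect, the technical heart of the matter.
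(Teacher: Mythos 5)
You should first note that the paper does not prove this lemma at all: it is quoted verbatim from the reference [Fan--Zhao, J.\ Math.\ Anal.\ Appl.\ 394 (2012), 687--695] and used as a black box (it enters only in the proof of Proposition \ref{pw7-global}, to rule out finite-time breakdown of the approximate solutions). So there is no in-paper argument to compare against; what follows is an assessment of your sketch on its own terms.

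Your outline is the standard and, I believe, correct template for a Serrin-type continuation criterion for this system, and it gets the key mechanisms right: the exact cancellation of the linear hyperbolic--parabolic coupling $-\int \mathbf v\cdot\nabla w+\int\nabla w\cdot\mathbf v=0$ at every derivative level, the appearance of the critical exponent $\gamma=\frac{2q}{q-d}$ from Gagliardo--Nirenberg plus Young applied to $\int w\,\mathbf v\cdot\nabla w$, and the Kato--Ponce splitting $\|w\mathbf v\|_{H^s}\lesssim\|\mathbf v\|_{L^q}\|w\|_{W^{s,2q/(q-2)}}+\|w\|_{L^\infty}\|\mathbf v\|_{H^s}$ with the borderline interpolation against the parabolic gain $\|w\|_{H^{s+1}}$. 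Two points deserve more care than your sketch gives them. First, the Stage-1 claim that the Moser iteration closes ``because $\|\mathbf v\|_{L^\gamma_tL^q_x}$ is exactly critical'' is the one genuinely fragile step: at critical scaling the iteration constants need not be summable for a large drift norm, and the standard repair is to split $[0,T_*)$ into finitely many subintervals on each of which $\int\|\mathbf v\|_{L^q}^{\gamma}\,dt$ is as small as needed (finiteness of the integral guarantees this), run the iteration on each piece, and concatenate; you should say this explicitly, since without it the $L^\infty$ bound --- which your Stage 2 genuinely needs for the term $\|w\|_{L^\infty}\|\mathbf v\|_{H^s}$ --- is not established. Second, the endpoint $q=\infty$ (where $\gamma=2$ and the Gagliardo--Nirenberg exponent $d/q$ degenerates to $0$) should be treated separately; it is in fact the easy case, since $\|w\mathbf v\|_{L^2}\le\|\mathbf v\|_{L^\infty}\|w\|_{L^2}$ and the $L^\infty$ bound on $w$ in Stage 1 can be bypassed, but as written your exponents $\frac{2}{1\pm d/q}$ silently assume $q<\infty$. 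With these two repairs the argument is a legitimate proof of the cited criterion.
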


Next, we introduce a change of variable $\tilde{u}=u-1$.
Thus, the problem \eqref{hp}-\eqref{Initial} turns
into
\begin{equation}\label{pw7-hptr1}
\begin{cases}
\tilde{u}_t-\Delta\tilde{u}=\nabla\cdot(\tilde{u}{\bf v})+\nabla\cdot {\bf v},\\
{\bf v}_t-\nabla\tilde{u}=0,\\
(\tilde{u}, {\bf v})(x,0)=(u_0-1, {\bf v}_0)(x).
\end{cases}
\end{equation}
Then the ``effective viscous flux'' ${\bf F}$ defined in \eqref{flux} can be written as
\begin{equation}\label{pw7-evf}
{\bf F}=\nabla\tilde{u}+(\tilde{u}+1)\bf{v}.
\end{equation}
By the first equation of $\eqref{pw7-hptr1}$, it is easy to see that
\begin{equation}\label{pw7-fut}
\nabla\cdot {\bf F}=\tilde{u}_t.
\end{equation}

Then ${\bf F}$ has the following useful estimate.

\begin{lemma}\label{pw7-lef1}
 Let $(\tilde{u}, {\bf v})$ be a smooth solution of \eqref{pw7-hptr1}. Then there exists a
 positive constant $C$ such that
\begin{equation}\label{pw7-f2p}
\begin{split}
\norm{\nabla {\bf F}}_{L^p}
\le&C (\norm{\tilde{u}_t}_{L^p}+ \norm{\nabla^{\bot}\tilde{u}\cdot {\bf v}}_{L^p}),
\end{split}
\end{equation}
where $p>1$.
\end{lemma}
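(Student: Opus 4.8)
The proof rests on recovering the full gradient of the vector field $\mathbf{F}$ from its divergence and its scalar curl, so the plan has two parts: a div--curl elliptic estimate, and the identification of $\nabla\cdot\mathbf{F}$ and $\nabla^{\bot}\cdot\mathbf{F}$ using the structure of \eqref{pw7-hptr1}. First I would record the div--curl identity in $\mathbb{R}^2$: writing $d:=\nabla\cdot\mathbf{F}$ and $\omega:=\nabla^{\bot}\cdot\mathbf{F}=\partial_2F_1-\partial_1F_2$, a direct computation gives
\[
\Delta F_1=\partial_1 d+\partial_2\omega,\qquad \Delta F_2=\partial_2 d-\partial_1\omega,
\]
so each entry $\partial_i F_j$ of $\nabla\mathbf{F}$ is a second-order Riesz-type operator $\partial_i\partial_k(-\Delta)^{-1}$ applied to $d$ or $\omega$. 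Since these operators are Calderón--Zygmund operators bounded on $L^p(\mathbb{R}^2)$ for $1<p<\infty$, and since for the smooth (Sobolev) solutions under consideration $\mathbf{F}=\nabla\tilde u+(\tilde u+1)\mathbf{v}$ decays at infinity so that this singular-integral representation is legitimate, one obtains the elliptic bound
\[
\|\nabla\mathbf{F}\|_{L^p}\le C\bigl(\|\nabla\cdot\mathbf{F}\|_{L^p}+\|\nabla^{\bot}\cdot\mathbf{F}\|_{L^p}\bigr).
\]

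It then remains to evaluate the two right-hand terms. The divergence is immediate from \eqref{pw7-fut}, namely $\nabla\cdot\mathbf{F}=\tilde u_t$. For the curl, I would use $\mathbf{F}=\nabla\tilde u+(\tilde u+1)\mathbf{v}$ together with $\nabla^{\bot}\cdot\nabla\tilde u=0$ and the Leibniz rule to get
\[
\nabla^{\bot}\cdot\mathbf{F}=\nabla^{\bot}\cdot\bigl((\tilde u+1)\mathbf{v}\bigr)=(\tilde u+1)\,\nabla^{\bot}\cdot\mathbf{v}+\nabla^{\bot}\tilde u\cdot\mathbf{v}.
\]
Finally I would show $\nabla^{\bot}\cdot\mathbf{v}\equiv0$: differentiating in $t$ and using the second equation $\mathbf{v}_t=\nabla\tilde u$ yields $\partial_t(\nabla^{\bot}\cdot\mathbf{v})=\nabla^{\bot}\cdot\nabla\tilde u=0$, hence $\nabla^{\bot}\cdot\mathbf{v}(\cdot,t)=\nabla^{\bot}\cdot\mathbf{v}_0=0$ for all $t$ by the compatibility condition $\nabla^{\bot}\cdot\mathbf{v}_0=0$ in \eqref{pw6-ict1}. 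Substituting $\nabla\cdot\mathbf{F}=\tilde u_t$ and $\nabla^{\bot}\cdot\mathbf{F}=\nabla^{\bot}\tilde u\cdot\mathbf{v}$ into the elliptic estimate yields \eqref{pw7-f2p}.

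The only genuinely delicate point is the first step: the div--curl estimate is not a mere algebraic identity, since it requires $\nabla\mathbf{F}$ to be correctly represented by singular integrals of $d$ and $\omega$ (equivalently, that no nonzero harmonic ambiguity survives), which is exactly where the decay/integrability of $\mathbf{F}$ — hence of $\tilde u$, $\nabla\tilde u$ and $\mathbf{v}$ — at spatial infinity enters; for smooth solutions this is standard and the $L^p$-boundedness is classical Calderón--Zygmund theory valid for $1<p<\infty$. Everything else is bookkeeping with the Leibniz rule and the precise form of \eqref{pw7-hptr1}.
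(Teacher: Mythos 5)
Your proof is correct and follows essentially the same route as the paper: reduce $\|\nabla\mathbf{F}\|_{L^p}$ to $\|\nabla\cdot\mathbf{F}\|_{L^p}+\|\nabla^{\bot}\cdot\mathbf{F}\|_{L^p}$, identify $\nabla\cdot\mathbf{F}=\tilde u_t$ from the first equation, and show $\nabla^{\bot}\cdot\mathbf{F}=\nabla^{\bot}\tilde u\cdot\mathbf{v}$ by propagating $\nabla^{\bot}\cdot\mathbf{v}_0=0$ through the second equation. The only difference is that the paper simply cites the div--curl $L^p$ estimate (Lemma 2.4(1) in the reference on 2D compressible Navier--Stokes), whereas you sketch its Calder\'on--Zygmund proof; that is a harmless elaboration.
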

\begin{proof} First we recall the following inequality (cf. Lemma 2.4(1) in  \cite{jiu13}):
\begin{equation}\label{pw7-FLP}
\begin{split}
\norm{\nabla {\bf F}}_{L^p}\le& C\big(\norm{\nabla\cdot {\bf F}}_{L^p}+\norm{\nabla^{\bot}\cdot {\bf F}}_{L^p}\big).
\end{split}
\end{equation}
It then follows from \eqref{pw7-dev} and \eqref{pw7-evf} that
\begin{equation}\label{pw7-fcur}
\begin{split}
\nabla^{\bot}\cdot {\bf F}=&\nabla^{\bot}\cdot\nabla \tilde{u}+\nabla^{\bot}\cdot(\tilde{u}{\bf v})+\nabla^{\bot}\cdot {\bf v}\\
=&\nabla^{\bot}\cdot(\tilde{u}{\bf v})+\nabla^{\bot}\cdot {\bf v}\\
=&(\tilde{u}+1)\nabla^{\bot}\cdot {\bf v}+\nabla^{\bot}\tilde{u}\cdot{\bf v},
\end{split}
\end{equation}
where in the second equality we have used the fact that $\nabla^{\bot}\cdot\nabla=0$. Operating $\nabla^{\bot}$ on the second equation of \eqref{pw7-hptr1} and integrating the result over $(0, t)$, we have from $\nabla^{\bot}\cdot {\bf{v}}_0=0$ that
$
\nabla^{\bot}\cdot \bf{v}=0,
$
which updates \eqref{pw7-fcur} as
$
\nabla^{\bot}\cdot {\bf F}=\nabla^{\bot}\tilde{u}\cdot \bf{v}.
$
This together with \eqref{pw7-fut} and \eqref{pw7-FLP} gives
\begin{equation*}
\begin{split}
\norm{\nabla {\bf F}}_{L^p}
\le&C \norm{\tilde{u}_t}_{L^p}+ C\norm{\nabla^{\bot}\tilde{u}\cdot {\bf v}}_{L^p}.
\end{split}
\end{equation*}
Thus, the proof of Lemma \ref{pw7-lef1} is completed. \end{proof}

The following  Gagliardo-Nirenberg inequality will be frequently used in this paper.
\begin{lemma}[\cite{Friedman-A}]
Let $1\le q, r\le \infty$ and $0<a\le 1 $ such that
$$\frac{1}{p}=a\bigg(\frac 1q-\frac12\bigg)+(1-a)\frac{1}{r}.$$
Then, for any $u\in W^{1,p}(\mathbb{R}^2)\cap L^r(\mathbb{R}^2)$, there exists
a positive constant $C$ depending only on $q,
r$ and $n$ such that the following inequality holds:
\begin{equation}\label{G-Ns}
\begin{split}
\norm{u}_{L^p(\mathbb{R}^2)}\le C\norm{\nabla
u}^a_{L^q(\mathbb{R}^2)}\norm{u}^{1-a}_{L^r(\mathbb{R}^2)}.
\end{split}
\end{equation}
\end{lemma}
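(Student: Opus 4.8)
This is the classical Gagliardo--Nirenberg interpolation inequality; in the sequel it suffices to quote \cite{Friedman-A} (or Nirenberg's original paper), but for completeness let me indicate how one would prove it. The plan is to reduce \eqref{G-Ns} to the endpoint Sobolev inequality $\norm{f}_{L^2(\mathbb{R}^2)}\le C\norm{\nabla f}_{L^1(\mathbb{R}^2)}$ and then interpolate. I would first observe that the admissibility relation $\frac1p=a\big(\frac1q-\frac12\big)+(1-a)\frac1r$ is precisely the scaling identity obtained by testing \eqref{G-Ns} on the dilates $x\mapsto u(\lambda x)$, $\lambda>0$; this both forces the powers occurring in \eqref{G-Ns} and shows the relation is necessary.

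Next I would establish the endpoint estimate for $f\in C_0^\infty(\mathbb{R}^2)$ by the slicing argument of Gagliardo and Nirenberg: from $\abs{f(x_1,x_2)}\le\int_{\mathbb{R}}\abs{\partial_1 f}\,dx_1$ and the analogous bound in $x_2$ one gets $\abs{f}^2\le\big(\int_{\mathbb{R}}\abs{\partial_1 f}\,dx_1\big)\big(\int_{\mathbb{R}}\abs{\partial_2 f}\,dx_2\big)$, and integrating over $\mathbb{R}^2$ by Tonelli's theorem yields $\norm{f}_{L^2}^2\le\norm{\partial_1 f}_{L^1}\norm{\partial_2 f}_{L^1}\le C\norm{\nabla f}_{L^1}^2$. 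Then, for $u\in C_0^\infty(\mathbb{R}^2)$ and a power $\gamma\ge1$ to be fixed, I would apply this to $f=\abs{u}^\gamma$ (first smoothing $\abs{u}$) and use H\"older's inequality with exponents $q,q'$:
\[
\norm{u}_{L^{2\gamma}}^{\gamma}\le C\gamma\,\norm{\,\abs{u}^{\gamma-1}\nabla u\,}_{L^1}\le C\gamma\,\norm{u}_{L^{(\gamma-1)q'}}^{\gamma-1}\norm{\nabla u}_{L^q}.
\]
Choosing $\gamma$ so that the scaling identity ties $2\gamma$ and $(\gamma-1)q'$ to the prescribed exponents $p$ and $r$, and, for the remaining values of $a$, interpolating the resulting family of estimates (using the elementary bound $\norm{u}_{L^p}\le\norm{u}_{L^{2\gamma}}^{\theta}\norm{u}_{L^r}^{1-\theta}$ and Young's inequality to absorb any leftover norm of $u$) gives \eqref{G-Ns} for smooth, compactly supported $u$. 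When $q<2$ one may shortcut this by combining $\norm{u}_{L^{q^{*}}}\le C\norm{\nabla u}_{L^q}$, $\frac1{q^{*}}=\frac1q-\frac12$, with H\"older interpolation between $L^{q^{*}}$ and $L^r$; for $q>2$ the Morrey embedding $W^{1,q}(\mathbb{R}^2)\hookrightarrow C^{0,1-2/q}$ replaces it. Finally, I would remove the smoothness and compact-support restrictions by mollification and truncation, applying the estimate to the approximants and passing to the limit with Fatou's lemma.

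The main obstacle is the exponent bookkeeping in the power-trick step --- selecting $\gamma\ge1$ so that the scaling relation holds exactly for the given triple $(p,q,r)$, and then iterating/interpolating to cover the whole admissible range of $a$ --- together with the care required in the approximation step, since a function in $W^{1,q}(\mathbb{R}^2)$ need not decay at infinity. Both are routine and are carried out in full in \cite{Friedman-A}, which is why the direct citation is the most economical choice here.
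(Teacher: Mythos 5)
The paper offers no proof of this lemma at all --- it is quoted directly from \cite{Friedman-A} --- so your remark that the citation suffices already matches what the authors do. Your sketch of the classical Gagliardo--Nirenberg argument (scaling to force the exponent relation, the slicing proof of $\norm{f}_{L^2(\mathbb{R}^2)}\le C\norm{\nabla f}_{L^1(\mathbb{R}^2)}$, the power trick $f=\abs{u}^{\gamma}$ with H\"older, interpolation, and a density/truncation step) is the standard textbook proof and is correct in outline, covering in particular the case $q=r=2$, $p=4$ (Ladyzhenskaya's inequality) that the paper actually uses.
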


\section{A Priori Estimates for Approximate Solutions}
In this section, we prove Theorem \ref{pw7-th}, by
constructing approximate solutions based upon the following mollified initial data:
\begin{equation*}
\begin{split}
\tilde{u}^\delta_0=j^\delta*\tilde{u}_0, \ \ \ {\bf v}_0^\delta=j^\delta*{\bf v}_0,
\end{split}
\end{equation*}
where $\tilde{u}_0=u_0-1$ and $j^\delta$ is the standard mollifying kernel of width $\delta$ (e.g. see \cite{Adams}). Then we consider the following approximate system
\begin{equation}\label{pw7-hptr}
\begin{cases}
\tilde{u}^\delta_t-\Delta\tilde{u}^\delta=\nabla\cdot(\tilde{u}^\delta{\bf v}^\delta)+\nabla\cdot {\bf v}^\delta,\\
{\bf v}^\delta_t-\nabla\tilde{u}^\delta=0,
\end{cases}
\end{equation}
with smooth initial data $(\tilde{u}^\delta_0, {\bf v}_0^\delta)$ which satisfies
\begin{equation}\label{ini}
(\tilde{u}^\delta_0, {\bf v}^\delta_0)\in H^3(\R^2)
\end{equation}
and
\begin{equation}\label{pw8mde0}
\norm{\tilde{u}^\delta_0}^2+\norm{{\bf v}^\delta_0}^2\le\norm{\tilde{u}_0}^2+\norm{{\bf v}_0}^2=\theta_0.
\end{equation}
By Lemma \ref{pw7-local}, we can obtain the local existence of solutions to the approximate system \eqref{pw7-hptr} with initial data $(u^\delta_0, {\bf v}_0^\delta)$ satisfying (\ref{ini})-(\ref{pw8mde0}).
Next, we shall show in a sequence of lemmas that these approximate solutions
satisfy some global {\it a priori} estimates, independent of the mollifying parameter $\delta$.
These estimates will then be applied in Section 4 to obtain the solutions of
Theorem \ref{pw7-th} as the limits of these approximate solutions.

For the sake of simplicity,  we still use $(\tilde{u}, {\bf v})$ to
represent the approximate solution $(\tilde{u}^\delta, {\bf v}^\delta)$ in this section.
Let $T>0$ be a fixed time and $(\tilde{u}, {\bf v})$ be the smooth
solution to \eqref{pw7-hptr} on $\R^2\times (0, T]$. We set $\sigma=\sigma(t)=\min\{1, t\}$ and define
\begin{equation}\label{pw7-ass1}
\begin{cases}\begin{split}
&A_1(T)\triangleq\sup\limits_{t\in[0,T]}\left(\norm{\tilde{u}}^2+\norm{{\bf v}}^2\right)+\int_0^T\norm{\nabla\tilde{u}}^2 dt, \\
&A_2(T)\triangleq\sup\limits_{t\in[0,T]}\left(\sigma\norm{\nabla\tilde{u}}^2+\sigma^2\norm{\tilde{u}_t}^2+\sigma^2\norm{{\bf v}_t}^2\right)+\int_0^T\left(\sigma\norm{\tilde{u}_t}^2+\sigma^2\norm{\nabla\tilde{u}_t}^2\right)dt,\\
&A_3(T)\triangleq\sup\limits_{t\in[0,T]}\|{\bf v}\|_{L^{4}}^4+\int_{0}^T\|{\bf v}\|_{L^{4}}^4dt.
\end{split}\end{cases}
\end{equation}

Next, we shall employ the technique of {\it a priori} assumption to derive the {\it a priori} estimates for the smooth solutions of \eqref{pw7-hptr}-\eqref{pw8mde0}.
That is, we first assume that the smooth solution $(\tilde{u}, {\bf v})$ satisfies for any $t\in[0, T]$
\begin{equation}\label{pw7-assup}
\begin{split}
A_1(T)\leq 2\theta_0, \ \  A_2(T)
\le  2\theta_0^\frac12,\ \ A_3(T)\le 2\theta_0^{\eta_0}, \ \
\sup_{t\in[0,T]}\|{\bf v}\|_{L^{p_0}}\le 6M,
\end{split}
\end{equation}
where $M$ and $\theta_0$ are from Theorem \ref{pw7-th}, and $\eta_0$ is defined as
\begin{equation}\label{pw7-etas}
\begin{split}
\eta_0\triangleq \frac{p_0-4}{2(p_0-2)}\in(0,\frac12).
\end{split}
\end{equation}
Then we will derive the {\it a priori} estimates to obtain global solutions. Finally, we show the obtained global solutions satisfy the above {\it a priori} assumption and close our argument.

We depart with the $L^2$-estimate of $(\tilde{u}, {\bf v})$.

\begin{lemma}\label{pw7-th1} Let the conditions of Theorem \ref{pw7-th} hold and $(\tilde{u}, {\bf v})$ be a smooth solution of \eqref{pw7-hptr}-\eqref{pw8mde0} satisfying \eqref{pw7-assup}. Then it holds that
\begin{equation}\label{pw7-L^4}
\norm{\tilde{u}}^2+\norm{{\bf v}}^2+\int_0^T\norm{\nabla\tilde{u}}^2 dt\leq \frac{3\theta_0}{2}.
\end{equation}
\end{lemma}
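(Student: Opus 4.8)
The plan is to derive the basic $L^2$ energy identity for the system \eqref{pw7-hptr} and then absorb all nonlinear contributions using the smallness of $\theta_0$ together with the \emph{a priori} bounds \eqref{pw7-assup}. First I would multiply the first equation of \eqref{pw7-hptr} by $\tilde{u}$ and integrate over $\mathbb{R}^2$; integrating by parts yields $\frac12\frac{d}{dt}\|\tilde{u}\|^2+\|\nabla\tilde{u}\|^2 = -\int \tilde{u}{\bf v}\cdot\nabla\tilde{u}\,dx-\int {\bf v}\cdot\nabla\tilde{u}\,dx$. Next I would test the second equation of \eqref{pw7-hptr} with ${\bf v}$, giving $\frac12\frac{d}{dt}\|{\bf v}\|^2 = \int \nabla\tilde{u}\cdot{\bf v}\,dx$. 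Adding the two identities, the linear crossed terms $-\int {\bf v}\cdot\nabla\tilde{u}\,dx$ and $+\int \nabla\tilde{u}\cdot{\bf v}\,dx$ cancel exactly, and one is left with
\begin{equation*}
\frac12\frac{d}{dt}\left(\|\tilde{u}\|^2+\|{\bf v}\|^2\right)+\|\nabla\tilde{u}\|^2=-\int_{\mathbb{R}^2}\tilde{u}\,{\bf v}\cdot\nabla\tilde{u}\,dx.
\end{equation*}

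The remaining task is to control the single cubic term $I:=-\int \tilde{u}\,{\bf v}\cdot\nabla\tilde{u}\,dx$ by $\varepsilon_1\|\nabla\tilde{u}\|^2$ plus something that integrates to a small multiple of $\theta_0$. I would estimate $|I|\le \|\tilde{u}\|_{L^4}\|{\bf v}\|_{L^4}\|\nabla\tilde{u}\|$ by Hölder, then apply the Gagliardo–Nirenberg inequality \eqref{G-Ns} in the form $\|\tilde{u}\|_{L^4}\le C\|\nabla\tilde{u}\|^{1/2}\|\tilde{u}\|^{1/2}$, so that
\begin{equation*}
|I|\le C\|\nabla\tilde{u}\|^{3/2}\|\tilde{u}\|^{1/2}\|{\bf v}\|_{L^4}\le \frac14\|\nabla\tilde{u}\|^2+C\|\tilde{u}\|^2\|{\bf v}\|_{L^4}^4.
\end{equation*}
Now I would use the \emph{a priori} assumptions: $\|\tilde{u}\|^2\le A_1(T)\le 2\theta_0$ and $\|{\bf v}\|_{L^4}^4$ is integrable in time with $\int_0^T\|{\bf v}\|_{L^4}^4\,dt\le A_3(T)\le 2\theta_0^{\eta_0}$. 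Hence after integrating in time,
\begin{equation*}
\|\tilde{u}(t)\|^2+\|{\bf v}(t)\|^2+\int_0^t\|\nabla\tilde{u}\|^2\,ds\le \theta_0+C\theta_0\int_0^t\|{\bf v}\|_{L^4}^4\,ds\le \theta_0+2C\theta_0^{1+\eta_0},
\end{equation*}
using \eqref{pw8mde0} for the initial term. Choosing $\varepsilon$ (hence $\theta_0$) small enough that $2C\theta_0^{\eta_0}\le \tfrac12$ yields the claimed bound $\|\tilde{u}\|^2+\|{\bf v}\|^2+\int_0^T\|\nabla\tilde{u}\|^2\,dt\le \tfrac32\theta_0$.

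The main subtlety is not any single estimate but the bookkeeping of which \emph{a priori} bound to feed where: one must spend $A_1$ to bound $\|\tilde u\|^2$ and $A_3$ to bound $\int\|{\bf v}\|_{L^4}^4\,dt$, and check that the resulting higher power $\theta_0^{1+\eta_0}$ is genuinely a small correction to $\theta_0$ (which is exactly why $\theta_0<1$ is assumed and why $\eta_0>0$ in \eqref{pw7-etas}). I would also double-check the exact cancellation of the linear coupling terms, since this is the structural reason the basic energy estimate closes at all despite the hyperbolic second equation carrying no dissipation for ${\bf v}$ — the $\|{\bf v}\|^2$ bound is obtained purely from this cancellation, not from any smoothing. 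No use of the effective viscous flux ${\bf F}$ is needed at this first level; that tool enters only in the subsequent lemmas bounding $A_2$ and $A_3$.
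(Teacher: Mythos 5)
Your proposal is correct and follows essentially the same route as the paper: the same $L^2$ energy identity with exact cancellation of the linear coupling terms, the same Hölder--Gagliardo--Nirenberg--Young treatment of the cubic term yielding $\tfrac14\|\nabla\tilde u\|^2+C\|\tilde u\|^2\|\mathbf{v}\|_{L^4}^4$, and the same closure via $A_1(T)A_3(T)\le C\theta_0^{1+\eta_0}$ with $\theta_0$ small. No discrepancies to report.
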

\begin{proof}
\par
Multiplying the first equation of $(\ref{pw7-hptr})$ by $\tilde{u}$ and the second by $v$, adding the results
and integrating by parts over $\R^2$, we have
\begin{equation}\label{pw7-1}
\begin{split}
\frac 12\frac{d}{dt}\left(\norm{\tilde{u}}^2+\norm{{\bf v}}^2\right)+\norm{\nabla\tilde{u}}^2
 =\int_{\R^2} \nabla\cdot(\tilde{u}{\bf v})\tilde{u}dx=-\int_{\R^2}\tilde{u}{\bf v}\nabla\tilde{u}dx.
\end{split}
\end{equation}
For the term on the right-hand side of \eqref{pw7-1}, we use \eqref{G-Ns}, H\"older and Cauchy-Schwarz inequalities to estimate it as
\begin{equation*}
\begin{split}
-\int_{\R^2}\tilde{u}{\bf v}\nabla\tilde{u}dx\le& \norm{\tilde{u}{\bf v}}^2+\frac14\norm{\nabla\tilde{u}}^2\\
\le& \norm{\tilde{u}}_{L^4}^2\norm{{\bf v}}_{L^4}^2+\frac14\norm{\nabla\tilde{u}}^2\\
\le& C\norm{\tilde{u}}\norm{\nabla\tilde{u}}\norm{{\bf v}}_{L^4}^2+\frac14\norm{\nabla\tilde{u}}^2\\
\le& C\norm{\tilde{u}}^2\norm{{\bf v}}_{L^4}^4+\frac12\norm{\nabla\tilde{u}}^2,
\end{split}
\end{equation*}
which therefore updates \eqref{pw7-1} as
\begin{equation*}
\begin{split}
\frac{d}{dt}\left(\norm{\tilde{u}}^2+\norm{{\bf v}}^2\right)+\norm{\nabla\tilde{u}}^2
 \le C\norm{\tilde{u}}^2\norm{{\bf v}}_{L^4}^4.
\end{split}
\end{equation*}
Integrating the above result over $[0,t]$ and using \eqref{pw7-cc}, \eqref{pw7-ass1} and \eqref{pw7-assup}, we have
\begin{equation*}
\begin{split}
&\norm{\tilde{u}}^2+\norm{{\bf v}}^2+\int_0^T\norm{\nabla\tilde{u}}^2dt\\
 \le& \norm{\tilde{u}_0}^2+\norm{{\bf v}_0}^2+C\int_0^T\norm{\tilde{u}}^2\norm{{\bf v}}_{L^4}^4dt\\
 \le& \norm{\tilde{u}_0}^2+\norm{{\bf v}_0}^2+CA_1(T)A_3(T)\\
 \le& \theta_0+C\theta_0^{1+\eta_0}\le \frac{3\theta_0}{2},
\end{split}
\end{equation*}
provided that $C\theta_0^{\eta_0}\le \frac12$. This yields \eqref{pw7-L^4} and completes the proof of Lemma \ref{pw7-th1}.
\end{proof}

\begin{lemma}\label{pw7-leUt}
Let the conditions of Theorem \ref{pw7-th} hold and $(\tilde{u}, {\bf v})$ be a smooth solution of \eqref{pw7-hptr}-\eqref{pw8mde0} satisfying \eqref{pw7-assup}. Then it holds that
\begin{equation}\label{pw7-L^2}
\begin{split}
\sigma\norm{\nabla\tilde{u}}^2+\sigma^2\norm{\tilde{u}_t}^2+\sigma^2\norm{{\bf v}_t}^2+\int_0^T\sigma\norm{\tilde{u}_t}^2dt+\int_0^T\sigma^2\norm{\nabla\tilde{u}_t}^2dt
\le  \theta_0^\frac12,
\end{split}
\end{equation}
where  $\sigma=\sigma(t)=\min\{1,t\}$.
\end{lemma}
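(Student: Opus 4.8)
The plan is to run a weighted energy estimate in which we differentiate the equations in time and test against the time derivatives, controlling the resulting nonlinear terms by the quantities $A_1(T)$, $A_3(T)$ and the $L^{p_0}$ bound on $\mathbf v$ that are furnished by the \emph{a priori} assumption \eqref{pw7-assup}. First I would apply $\partial_t$ to the first equation of \eqref{pw7-hptr} and use the second equation $\mathbf v_t=\nabla\tilde u$ to eliminate $\mathbf v_t$, obtaining
\[
\tilde u_{tt}-\Delta\tilde u_t=\nabla\cdot(\tilde u_t\mathbf v)+\nabla\cdot(\tilde u\,\nabla\tilde u)+\Delta\tilde u .
\]
Multiplying by $\tilde u_t$ and integrating by parts over $\R^2$ produces
\[
\tfrac12\tfrac{d}{dt}\norm{\tilde u_t}^2+\norm{\nabla\tilde u_t}^2
=-\int_{\R^2}\tilde u_t\,\mathbf v\cdot\nabla\tilde u_t\,dx-\int_{\R^2}\tilde u\,\nabla\tilde u\cdot\nabla\tilde u_t\,dx-\int_{\R^2}\nabla\tilde u\cdot\nabla\tilde u_t\,dx .
\]
Each term on the right is estimated by Cauchy--Schwarz and Gagliardo--Nirenberg \eqref{G-Ns}: the last term is absorbed as $\tfrac18\norm{\nabla\tilde u_t}^2+C\norm{\nabla\tilde u}^2$; the middle term is handled via $\norm{\tilde u}_{L^\infty}$-type interpolation or $\norm{\tilde u}_{L^4}\norm{\nabla\tilde u}_{L^4}$ against $\norm{\nabla\tilde u_t}$; and the critical term $\int \tilde u_t\,\mathbf v\cdot\nabla\tilde u_t$ is estimated by $\norm{\tilde u_t}_{L^{p}}\norm{\mathbf v}_{L^{p_0}}\norm{\nabla\tilde u_t}$ with $\tfrac1p+\tfrac1{p_0}=\tfrac12$, and then $\norm{\tilde u_t}_{L^p}\le C\norm{\tilde u_t}^{1-a}\norm{\nabla\tilde u_t}^a$ with $a=2/p_0<1/2$, so that a Young inequality with the small factor coming from $\norm{\mathbf v}_{L^{p_0}}\le 6M$ still allows absorption of a fraction of $\norm{\nabla\tilde u_t}^2$ at the cost of a factor $(6M)^{\text{something}}\norm{\tilde u_t}^2$.

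Next I would multiply the resulting differential inequality by the weight $\sigma^2$ (and a companion plain $L^2$-type estimate for $\tfrac{d}{dt}\norm{\nabla\tilde u}^2$ by the weight $\sigma$), using $\sigma'=1$ on $(0,1)$ and $\sigma'=0$ afterwards so that the extra commutator terms $\sigma\sigma'\norm{\tilde u_t}^2$ and $\sigma'\norm{\nabla\tilde u}^2$ are themselves controlled by already-estimated quantities. Integrating in time from $0$ to $T$, the boundary term at $t=0$ vanishes because of the $\sigma$-weights, and the right-hand side collapses to a sum of terms of the form $\int_0^T\norm{\nabla\tilde u}^2\,dt\le 3\theta_0/2$ (from Lemma \ref{pw7-th1}), $A_1(T)A_3(T)\le C\theta_0^{1+\eta_0}$, and $\big(C(6M)^{\kappa}\big)\int_0^T\sigma^2\norm{\tilde u_t}^2\,dt$, the last of which is re-absorbed into the left-hand side via a Gronwall argument once $\theta_0$ (hence the time-integral) is taken small, or directly since the total dissipation budget is $O(\theta_0)$. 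For the pointwise bounds $\sigma^2\norm{\mathbf v_t}^2=\sigma^2\norm{\nabla\tilde u}^2\le\sigma\cdot\sigma\norm{\nabla\tilde u}^2$ one simply uses $\mathbf v_t=\nabla\tilde u$ together with the $\sigma\norm{\nabla\tilde u}^2$ bound just obtained, and for $\sigma\norm{\nabla\tilde u}^2$ one needs an intermediate estimate obtained by multiplying the first equation of \eqref{pw7-hptr} by $\tilde u_t$, integrating, and multiplying by $\sigma$: this gives $\tfrac{d}{dt}(\sigma\norm{\nabla\tilde u}^2)+\sigma\norm{\tilde u_t}^2\le C\sigma\norm{\nabla\cdot(\tilde u\mathbf v)+\nabla\cdot\mathbf v}^2+\sigma'\norm{\nabla\tilde u}^2$, and the forcing is estimated by $\norm{\nabla\tilde u}^2+\norm{\tilde u}_{L^4}^2\norm{\nabla\mathbf v}^2+\dots$ — but since $\nabla\mathbf v$ is \emph{not} available we instead keep the term $\nabla\cdot(\tilde u\mathbf v)$ in the form $\mathbf v\cdot\nabla\tilde u+\tilde u\,\nabla\cdot\mathbf v$ and route $\nabla\cdot\mathbf v=\nabla\cdot\mathbf F-\nabla\cdot(\tilde u\mathbf v)$... in practice the cleaner route is to test against $\tilde u_t$ directly in $\norm{\tilde u_t}$ form and use $\norm{\nabla\cdot(\tilde u\mathbf v)}\le\norm{\mathbf v}_{L^{p_0}}\norm{\nabla\tilde u}_{L^{p_0/(p_0/2-1)}}$ combined with $\tilde u_t=\nabla\cdot\mathbf F$ and Lemma \ref{pw7-lef1}.

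The main obstacle, and the place where the \emph{a priori} assumption is indispensable, is the term $\int_{\R^2}\tilde u_t\,\mathbf v\cdot\nabla\tilde u_t\,dx$ together with its analogue coming from $\nabla\cdot(\tilde u\mathbf v)$ in the intermediate estimate: because $\mathbf v$ is only in $L^{p_0}$ (no gradient control), the only way to close is the split $\tfrac1p+\tfrac1{p_0}=\tfrac12$ with $p=2p_0/(p_0-2)$, which forces the interpolation exponent $a=2/p_0$ to be strictly below $1/2$ — this is exactly why the hypothesis $p_0>4$ is needed — so that Young's inequality puts a genuine fraction of $\norm{\nabla\tilde u_t}^2$ on the left while the leftover carries only the factor $C M^{\kappa}\norm{\tilde u_t}^2$, which Gronwall then tames. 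The bookkeeping to land precisely on the bound $\theta_0^{1/2}$ (rather than $C\theta_0$ or $C\theta_0^{1+\eta_0}$) is a matter of noting that the dominant contribution is $\int_0^T\sigma\norm{\tilde u_t}^2\,dt$ fed by $\int_0^T\norm{\nabla\tilde u}^2\,dt=O(\theta_0)$, and $\theta_0\le\theta_0^{1/2}$ for $\theta_0<1$, with room to spare once $\varepsilon$ is chosen small depending on $M$.
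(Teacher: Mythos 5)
Your overall architecture --- a $\sigma$-weighted estimate from testing the first equation against $\tilde u_t$, followed by a $\sigma^2$-weighted estimate for the time-differentiated system --- is exactly the paper's two-step scheme, and your treatment of $\int\tilde u_t\,\mathbf v\cdot\nabla\tilde u_t$ is a workable variant: the paper instead exploits the \emph{smallness} $\|\mathbf v\|_{L^4}^4\le A_3(T)\le 2\theta_0^{\eta_0}$ from \eqref{pw7-assup} rather than the large $L^{p_0}$ bound, so the leftover is absorbed with no Gronwall at all; your route with the factor $CM^{\kappa}$ only closes by re-absorption into the dissipation with $\lambda$ chosen small depending on $M$, since a genuine time-Gronwall would produce $e^{CM^{\kappa}T}$ and destroy the uniform-in-$T$ bound. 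However, there are two real gaps.

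First, the term $\int_0^T\sigma^2\int\tilde u\,\nabla\tilde u\cdot\nabla\tilde u_t$ is the crux of the lemma, and you dispose of it with ``$\|\tilde u\|_{L^4}\|\nabla\tilde u\|_{L^4}$ against $\|\nabla\tilde u_t\|$'' without any mechanism for bounding $\|\nabla\tilde u\|_{L^4}$. No $H^2$ estimate for $\tilde u$ is available in this low-regularity setting (it would require $\nabla\mathbf v$, which the hyperbolic equation never provides), so the only route is the effective-viscous-flux decomposition $\nabla\tilde u=\mathbf F-(\tilde u+1)\mathbf v$ together with Lemma \ref{pw7-lef1} --- whose right-hand side $\|\nabla^{\bot}\tilde u\cdot\mathbf v\|$ reintroduces $\|\nabla\tilde u\|_{L^4}$ and must be closed by a self-absorption argument carrying the small factor $\theta_0^{\eta_0/4}$ --- culminating in the space--time bound $\int_0^T\sigma^2\|\nabla\tilde u\|_{L^4}^4\,dt\le C\theta_0^{\eta_0}$ derived in \eqref{pw7-u40-1}--\eqref{pw7-u4l2}. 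This is the central technical point of the entire paper and it is absent from your argument. Second, your intermediate $\sigma$-weighted inequality keeps the forcing in the form $C\sigma\|\nabla\cdot(\tilde u\mathbf v)+\nabla\cdot\mathbf v\|^2$, which is uncontrollable for the same reason; you notice the problem but do not resolve it. The paper's fix is to integrate by parts in space so the linear term becomes $-\int\sigma\,\mathbf v\cdot\nabla\tilde u_t$, and then integrate by parts in \emph{time} using $\mathbf v_t=\nabla\tilde u$, converting it into $\sigma\int|\nabla\tilde u|^2$ plus boundary terms controlled by Lemma \ref{pw7-th1}; a direct Cauchy--Schwarz here would leave $\int_0^T\|\mathbf v\|^2\,dt$, which grows linearly in $T$ and is fatal to a global estimate.
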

\begin{proof}We divide our proof into two steps.

{\bf Step 1}. We first multiply the first equation of $(\ref{pw7-hptr})$ by $\sigma u_t$ and
integrate the resulting equation over $\R^2\times[0, T]$ to get
\begin{equation}\label{pw7-es-ut}
\begin{split}
&\frac{1}{2}\sigma\norm{\nabla\tilde{u}}^2+\int_0^T\sigma\norm{\tilde{u}_t}^2dt\\
=&\frac{1}{2}\int_0^{\sigma(T)}\norm{\nabla\tilde{u}}^2dt-\int_0^T\sigma\int_{\mathbb{R}^2}
\tilde{u}{\bf v}\nabla\tilde{u}_t dxdt-\int_0^T\sigma\int_{\mathbb{R}^2}{\bf v}\nabla\tilde{u}_t dxdt,
\end{split}
\end{equation}
where we have used the fact that
\begin{equation*}
\begin{split}
\frac{1}{2}\int_0^{T}\sigma_t\norm{\nabla\tilde{u}}^2dt=\frac{1}{2}\int_0^{\sigma(T)}\norm{\nabla\tilde{u}}^2dt.
\end{split}
\end{equation*}
For the first term on the right-hand side of \eqref{pw7-es-ut}, we have from \eqref{pw7-L^4}
\begin{equation}\label{pw7-es1}
\begin{split}
\frac{1}{2}\int_0^{\sigma(T)}\norm{\nabla\tilde{u}}^2dt\le \frac{1}{2}\int_0^T\norm{\nabla\tilde{u}}^2dt \le C\theta_0.
\end{split}
\end{equation}
For the second term on the right-hand side of \eqref{pw7-es-ut}, we have from \eqref{G-Ns}, Cauchy-Schwarz inequality, \eqref{pw7-assup} and \eqref{pw7-L^4} that
\begin{equation}\label{pw7-es2}
\begin{split}
-\int_0^T\sigma\int_{\mathbb{R}^2}
\tilde{u}{\bf v}\nabla\tilde{u}_t dxdt
\le& \lambda\int_0^T\sigma^2
\|\nabla\tilde{u}_t\|^2dt+C\int_0^T
\|\tilde{u}{\bf v}\|^2dt\\
\le&\lambda\int_0^T\sigma^2
\|\nabla\tilde{u}_t\|^2dt+ C\int_0^T
\|\tilde{u}\|_{L^4}^2\|{\bf v}\|_{L^4}^2dt\\
\le&\lambda\int_0^T\sigma^2
\|\nabla\tilde{u}_t\|^2dt+ C\int_0^T
\|\tilde{u}\|_{L^2}\|\nabla\tilde{u}\|_{L^2}\|{\bf v}\|_{L^4}^2dt\\
\le&\lambda\int_0^T\sigma^2
\|\nabla\tilde{u}_t\|^2dt+ C\int_0^T
\|\nabla\tilde{u}\|^2dt+C\int_0^T
\|\tilde{u}\|^2\|{\bf v}\|_{L^4}^4dt\\
\le&\lambda\int_0^T\sigma^2
\|\nabla\tilde{u}_t\|^2dt+ C\theta_0+C\theta_0^{1+\eta_0}\\
\le&\lambda\int_0^T\sigma^2
\|\nabla\tilde{u}_t\|^2dt+ C\theta_0,
\end{split}
\end{equation}
where $\lambda>0$ is a constant to be determined later.
For the last term on the right-hand side of \eqref{pw7-es-ut}, we have
\begin{equation}\label{pw7-es23}
\begin{split}
-\int_0^T\sigma\int_{\mathbb{R}^2}
 {\bf v}\nabla\tilde{u}_{t} dxdt=&-\int_0^T\left(\sigma\int_{\mathbb{R}^2}
{\bf v}\nabla\tilde{u} dx\right)_tdt+\int_0^{\sigma(T)}\int_{\mathbb{R}^2}
{\bf v}\nabla\tilde{u} dxdt\\
&+\int_0^T\sigma\int_{\mathbb{R}^2}
{\bf v}_t\nabla\tilde{u} dxdt= I_1+I_2+I_3.
\end{split}
\end{equation}
By Cauchy-Schwarz inequality and \eqref{pw7-L^4}, we have from $0\le \sigma\le1$ that
\begin{equation*}
\begin{split}
I_1=&-\sigma\int_{\mathbb{R}^2}
{\bf v}\nabla\tilde{u} dx\le \frac{\sigma}{4}\norm{\nabla\tilde{u}}^2+\sigma\norm{{\bf v}}^2\\[2mm]
\le& \frac{\sigma}{4}\norm{\nabla\tilde{u}}^2+2\theta_0,
\end{split}
\end{equation*}

\begin{equation*}
\begin{split}
I_2=&\int_0^{\sigma(T)}\int_{\mathbb{R}^2}
{\bf v}\nabla\tilde{u} dxdt\\
\le&  \frac12\int_0^{\sigma(T)}\norm{\nabla\tilde{u}}^2dt+\frac12\int_0^{\sigma(T)}\norm{{\bf v}}^2dt\\
\le&\frac12\int_0^{T}\norm{\nabla\tilde{u}}^2dt+\frac12\sup\limits_{t\in[0,T]}\norm{{\bf v}}^2\\
\le& 2\theta_0
\end{split}
\end{equation*}
and
\begin{equation*}
\begin{split}
I_3=&\int_0^T\sigma\int_{\mathbb{R}^2}
{\bf v}_t\nabla\tilde{u} dxdt =\int_0^T\sigma\int_{\mathbb{R}^2}
|\nabla\tilde{u}|^2 dxdt\le \int_0^T\int_{\mathbb{R}^2}
|\nabla\tilde{u}|^2 dxdt\le 2\theta_0,
\end{split}
\end{equation*}
where we have used the second equation of (\ref{pw7-hptr}). Substituting \eqref{pw7-es1}, \eqref{pw7-es2}, \eqref{pw7-es23} and the above estimates for $I_i (i=1,2,3)$ into \eqref{pw7-es-ut}, we have
\begin{equation}\label{pw7-es6}
\begin{split}
\sigma\norm{\nabla\tilde{u}}^2+4\int_0^T\sigma\norm{\tilde{u}_t}^2dt\le 4\lambda\int_0^T\sigma^2
\|\nabla\tilde{u}_t\|^2dt+C\theta_0.
\end{split}
\end{equation}

{\bf Step 2}. Next, we need to obtain the estimate of
$\displaystyle\int_0^T\sigma^2\norm{\nabla\tilde{u}_t}^2dt$. To this end, we differentiate $\eqref{pw7-hptr}$ with respect time $t$ to get
\begin{equation}\label{hptr2}
\begin{cases}
\tilde{u}_{tt}-\Delta\tilde{u}_{t}=\nabla\cdot(\tilde{u}{\bf v})_{t}+\nabla\cdot {\bf v}_{t},\\
{\bf v}_{tt}-\nabla\tilde{u}_t=0.
\end{cases}
\end{equation}
Multiplying the first equation of $\eqref{hptr2}$ by $\sigma^2\tilde{u}_t$ and the
second by $\sigma^2{\bf v}_t$, adding the results followed by an integration over $\R^2\times[0, t]$, we have
\begin{equation}\label{pw7-es-utt}
\begin{split}
&\frac{\sigma^2}{2}\norm{\tilde{u}_t}^2+\frac{\sigma^2}{2}\norm{{\bf v}_t}^2+\int_0^T\sigma^2\norm{\nabla\tilde{u}_t}^2dt\\
=&\int_0^{\sigma(T)}\sigma\norm{\tilde{u}_t}^2dt+\int_0^{\sigma(T)}\sigma\norm{{\bf v}_t}^2dt-\int_0^T\sigma^2\int_{\mathbb{R}^2}(\tilde{u}{\bf v})_t \nabla\tilde{u}_{t}dxdt\\
\le &\int_0^T\sigma\norm{\tilde{u}_t}^2dt+\int_0^T\sigma\norm{\nabla\tilde{u}}^2dt-\int_0^T\sigma^2\int_{\mathbb{R}^2}\tilde{u}{\bf v}_t \nabla\tilde{u}_tdxdt-\int_0^T\sigma^2\int_{\mathbb{R}^2}\tilde{u}_t {\bf v} \nabla\tilde{u}_tdxdt\\
\le &C\theta_0+\int_0^T\sigma\norm{\tilde{u}_t}^2dt-\int_0^T\sigma^2\int_{\mathbb{R}^2}\tilde{u}\nabla\tilde{u} \nabla\tilde{u}_tdxdt-\int_0^T\sigma^2\int_{\mathbb{R}^2}\tilde{u}_t {\bf v} \nabla\tilde{u}_tdxdt,
\end{split}
\end{equation}
where we have used the second equation of (\ref{pw7-hptr}) and \eqref{pw7-L^4}. For the third term on the right-hand of the above inequality, we estimate it by Cauchy-Schwarz inequality, \eqref{G-Ns} and  \eqref{pw7-L^4} as
\begin{equation}\label{pw7-es-l2}
\begin{split}
\int_0^T\sigma^2\int_{\mathbb{R}^2}\tilde{u}\nabla\tilde{u} \nabla\tilde{u}_tdxdt\le& \frac{1}{4}\int_0^T\sigma^2 \norm{\nabla\tilde{u}_t}^2 dt+ \int_0^T\sigma^2\norm{\tilde{u}\nabla\tilde{u}}^2dt\\
\le&\frac{1}{4}\int_0^T\sigma^2 \norm{\nabla\tilde{u}_t}^2 dt+ \int_0^T\sigma^2\norm{\tilde{u}}_{L^4}^2\norm{\nabla\tilde{u}}_{L^4}^2dt\\
\le&\frac{1}{4}\int_0^T\sigma^2 \norm{\nabla\tilde{u}_t}^2 dt+ C\int_0^T\sigma^2\norm{\tilde{u}}\norm{\nabla\tilde{u}}\norm{\nabla\tilde{u}}_{L^4}^2dt\\
\le&\frac{1}{4}\int_0^T\sigma^2 \norm{\nabla\tilde{u}_t}^2 dt+C\int_0^T\sigma^2\norm{\nabla\tilde{u}}^2dt+ C\int_0^T\sigma^2\norm{\tilde{u}}^2\norm{\nabla\tilde{u}}_{L^4}^4dt\\
\le&C\theta_0+\frac{1}{4}\int_0^T\sigma^2 \norm{\nabla\tilde{u}_t}^2 dt+ C\theta_0\int_0^T\sigma^2\norm{\nabla\tilde{u}}_{L^4}^4dt.
\end{split}
\end{equation}
Furthermore we have from \eqref{G-Ns} and \eqref{pw7-evf} that
\begin{equation}\label{pw7-u40-12}
\begin{split}
\norm{\nabla \tilde{u}}_{L^4}\le& \norm{{\bf F}- \tilde{u}{\bf v}-{\bf v}}_{L^4}\\
\le&\norm{{\bf F}}_{L^4}+\norm{\tilde{u}{\bf v}}_{L^4}+\norm{{\bf v}}_{L^4}\\
\le& C\big(\norm{{\bf F}}^\frac12\norm{\nabla {\bf F}}^\frac12+\norm{\tilde{u}}_{L^\infty}\norm{{\bf v}}_{L^{4}}+\norm{{\bf v}}_{L^4}\big)\\
\le& C\big(\norm{{\bf F}}^\frac12\norm{\nabla {\bf F}}^\frac12+\norm{\tilde{u}}_{L^4}^\frac12\norm{\nabla\tilde{u}}_{L^4}^\frac12\norm{{\bf v}}_{L^{4}}+\norm{{\bf v}}_{L^4}\big).
\end{split}
\end{equation}
Thus, by \eqref{pw7-assup} and \eqref{pw7-u40-12}, we have
\begin{equation}\label{pw7-u40-1}
\begin{split}
&\int_0^T\sigma^2\norm{\nabla\tilde{u}}_{L^4}^4dt\\
\le& C\int_0^T\sigma^2\left(\norm{{\bf F}}^2\norm{\nabla {\bf F}}^2+\norm{\tilde{u}}_{L^4}^2\norm{\nabla\tilde{u}}_{L^4}^2\norm{{\bf v}}_{L^{4}}^4+\norm{{\bf v}}_{L^4}^4\right)dt\\
\le& CA_3(T)+C\int_0^T\sigma^2\norm{{\bf F}}^2\norm{\nabla {\bf F}}^2dt+C\int_0^T\sigma^2\norm{\tilde{u}}_{L^4}^2\norm{\nabla\tilde{u}}_{L^4}^2\norm{{\bf v}}_{L^{4}}^4dt\\
=&C\theta_0^{\eta_0}+ I_4+I_5.
\end{split}
\end{equation}
To control $I_4$, we need to estimate $\norm{{\bf F}}$ and $\norm{\nabla {\bf F}}$ first. By \eqref{G-Ns}, \eqref{pw7-evf}, Cauchy-Schwarz inequality, we have from \eqref{pw7-ass1}, \eqref{pw7-assup} and \eqref{pw7-L^4} that
\begin{equation}\label{pw7-f22-0}
\begin{split}
\norm{{\bf F}}\le& \norm{\nabla\tilde{u}+\tilde{u}{\bf v}+{\bf v}}\\
\le& \norm{\nabla\tilde{u}}+\norm{\tilde{u}{\bf v}}+\norm{{\bf v}}\\
\le& \norm{\nabla\tilde{u}}+\norm{\tilde{u}}_{L^4}\norm{{\bf v}}_{L^4}+\norm{{\bf v}}\\
\le& \norm{\nabla\tilde{u}}+C\norm{\tilde{u}}^\frac12\norm{\nabla\tilde{u}}^\frac12\norm{{\bf v}}_{L^4}+\norm{{\bf v}}\\
\le& \norm{\nabla\tilde{u}}+C\norm{\tilde{u}}\norm{{\bf v}}_{L^4}^2+\norm{{\bf v}}\\
\le& \norm{\nabla\tilde{u}}+C\theta_0^\frac{\eta_0+1}{2}+C\theta_0^\frac12\\
\le& \norm{\nabla\tilde{u}}+C\theta_0^\frac12,
\end{split}
\end{equation}
where we have used the assumption $\theta_0<1$ and the fact $\eta_0>0$.

By \eqref{pw7-f2p} and H\"{o}lder inequality, we have
\begin{equation}\label{pw7-f2pf-0}
\begin{split}
\norm{\nabla {\bf F}}
\le C \norm{\tilde{u}_t}+ C\norm{\nabla^{\bot}\tilde{u}\cdot {\bf v}}
\le C \norm{\tilde{u}_t}+ C\norm{\nabla\tilde{u}}_{L^4}\norm{{\bf v}}_{L^4}.
\end{split}
\end{equation}
Then by \eqref{pw7-ass1}, \eqref{pw7-assup}, \eqref{pw7-f22-0}, \eqref{pw7-f2pf-0} and Cauchy-Schwarz inequality, we estimate $I_4$ as
\begin{equation}\label{pw7-f2pf-1}
\begin{split}
I_4
\le& C\int_0^T\sigma(\norm{\nabla\tilde{u}}^2+\theta_0)\sigma(\norm{\tilde{u}_t}^2+ \norm{\nabla\tilde{u}}^2_{L^4}\norm{{\bf v}}^2_{L^4}) dt\\
\le& C\left(A_2(T)+\theta_0\right)\int_0^T\sigma(\norm{\tilde{u}_t}^2+ \norm{\nabla\tilde{u}}^2_{L^4}\norm{{\bf v}}^2_{L^4}) dt\\
\le& C\theta_0^\frac12\int_0^T\sigma\norm{\tilde{u}_t}^2dt+C\theta_0^\frac12\int_0^T\sigma^2\norm{\nabla\tilde{u}}^4_{L^4}dt+C\theta_0^\frac12\int_0^T\norm{{\bf v}}^4_{L^4}dt\\
\le& C\theta_0^\frac12A_2(T)+C\theta_0^\frac12\int_0^T\sigma^2\norm{\nabla\tilde{u}}^4_{L^4}dt+C\theta_0^\frac12A_3(T)\\
\le& C\theta_0+C\theta_0^\frac12\int_0^T\sigma^2\norm{\nabla\tilde{u}}^4_{L^4}dt+C\theta_0^{\frac{1}{2}+\eta_0}\\
\le& C\theta_0^\frac12+C\theta_0^\frac12\int_0^T\sigma^2\norm{\nabla\tilde{u}}^4_{L^4}dt.
\end{split}
\end{equation}
Furthermore we use \eqref{G-Ns}, \eqref{pw7-assup} and \eqref{pw7-L^4} to estimate $I_5$ as
\begin{equation}\label{pw7-i-5}
\begin{split}
I_5=&C\int_0^T\sigma^2\norm{\tilde{u}}_{L^4}^2\norm{\nabla\tilde{u}}_{L^4}^2\norm{{\bf v}}_{L^{4}}^4dt\\
\le& \frac14\int_0^T\sigma^2\norm{\nabla\tilde{u}}_{L^4}^4dt+CA_3^2(T)\int_0^T\sigma^2\norm{\tilde{u}}_{L^4}^4dt\\
\le&\frac14\int_0^T\sigma^2\norm{\nabla\tilde{u}}_{L^4}^4dt+CA_3^2(T)\int_0^T\sigma^2\norm{\tilde{u}}^2\norm{\nabla\tilde{u}}^2dt\\
\le& \frac14\int_0^T\sigma^2\norm{\nabla\tilde{u}}_{L^4}^4dt+CA_1^2(T)A_3^2(T)\\
\le& \frac14\int_0^T\sigma^2\norm{\nabla\tilde{u}}_{L^4}^4dt+C\theta_0^{2(1+\eta_0)}\\
\le& \frac14\int_0^T\sigma^2\norm{\nabla\tilde{u}}_{L^4}^4dt+C\theta_0^{\eta_0}.
\end{split}
\end{equation}
Substituting \eqref{pw7-f2pf-1} and \eqref{pw7-i-5}  into \eqref{pw7-u40-1} and choosing $\theta_0$ small enough such that $C\theta_0^\frac12\le \frac14$, we get from \eqref{pw7-etas} that
\begin{equation}\label{pw7-u4l2}
\begin{split}
\int_0^T\sigma^2\norm{\nabla\tilde{u}}_{L^4}^4dt
\le C\theta_0^{\eta_0}+C\theta_0^{\frac12}\le C\theta_0^{\eta_0}.
\end{split}
\end{equation}
By \eqref{pw7-es-l2} and \eqref{pw7-u4l2}, we get
\begin{equation}\label{pw7-es-l2f}
\begin{split}
\bigg|\int_0^T\sigma^2\int_{\mathbb{R}^2}\tilde{u}\nabla\tilde{u} \nabla\tilde{u}_tdxdt\bigg|\le& C\theta_0^{1+\eta_0}+\frac{1}{4}\int_0^T\sigma^2 \norm{\nabla\tilde{u}_t}^2 dt\\
\le& C\theta_0+\frac{1}{4}\int_0^T\sigma^2 \norm{\nabla\tilde{u}_t}^2 dt.
\end{split}
\end{equation}
So far we have finished estimating the third term on the right hand side of \eqref{pw7-es-utt}. Next we proceed to estimate
the last term on the right-hand of \eqref{pw7-es-utt}. To this end, we use Cauchy-Schwarz inequality, \eqref{G-Ns} and \eqref{pw7-assup} to get
\begin{equation}\label{pw7-es-l3}
\begin{split}
\int_0^T\sigma^2\int_{\mathbb{R}^2}\tilde{u}_t {\bf v} \nabla\tilde{u}_tdxdt\le& \frac{1}{4}\int_0^T\sigma^2 \norm{\nabla\tilde{u}_t}^2 dt+ \int_0^T\sigma^2\norm{\tilde{u}_t {\bf v}}^2dt\\[2mm]
\le& \frac{1}{4}\int_0^T\sigma^2 \norm{\nabla\tilde{u}_t}^2 dt+ \int_0^T\sigma^2\norm{\tilde{u}_t}_{L^4}^2\norm{{\bf v}}_{L^4}^2dt\\[2mm]
\le& \frac{1}{4}\int_0^T\sigma^2 \norm{\nabla\tilde{u}_t}^2 dt+ C\int_0^T\sigma^2\norm{\tilde{u}_t}\norm{\nabla\tilde{u}_t}\norm{{\bf v}}_{L^4}^2dt\\[2mm]
\le& \frac{1}{2}\int_0^T\sigma^2 \norm{\nabla\tilde{u}_t}^2 dt+ C\int_0^T\sigma^2\norm{\tilde{u}_t}^2\norm{{\bf v}}_{L^4}^4dt\\[2mm]
\le& \frac{1}{2}\int_0^T\sigma^2 \norm{\nabla\tilde{u}_t}^2 dt+ C\theta_0^{\eta_0}\int_0^T\sigma^2\norm{\tilde{u}_t}^2dt.
\end{split}
\end{equation}
Substituting \eqref{pw7-es-l2f} and \eqref{pw7-es-l3} into \eqref{pw7-es-utt}, we get from $0\le\sigma\le 1$ and $\theta_0\le 1$ that
\begin{equation*}
\begin{split}
\sigma^2\norm{\tilde{u}_t}^2+\sigma^2\norm{{\bf v}_t}^2+\int_0^T\sigma^2\norm{\nabla\tilde{u}_t}^2dt
\le C\int_0^T\sigma\norm{\tilde{u}_t}^2dt+ C\theta_0,
\end{split}
\end{equation*}
which, along with \eqref{pw7-es6}, immediately leads to
\begin{equation*}
\begin{split}
&\sigma^2\norm{\tilde{u}_t}^2+\sigma^2\norm{{\bf v}_t}^2+\int_0^T\sigma^2\norm{\nabla\tilde{u}_t}^2dt
\le C\lambda\int_0^T\sigma^2
\|\nabla\tilde{u}_t\|^2dt+ C\theta_0.
\end{split}
\end{equation*}
Choosing $\lambda$ small such that $C\lambda\le \frac12$, we have
\begin{equation*}
\begin{split}
&\sigma^2\norm{\tilde{u}_t}^2+\sigma^2\norm{{\bf v}_t}^2+\int_0^T\sigma^2\norm{\nabla\tilde{u}_t}^2dt
\le C\theta_0,
\end{split}
\end{equation*}
which, together with \eqref{pw7-es6}, gives
\begin{equation}\label{pw7-es9}
\begin{split}
\sigma\norm{\nabla\tilde{u}}^2+\sigma^2\norm{\tilde{u}_t}^2+\sigma^2\norm{{\bf v}_t}^2+\int_0^T\sigma\norm{\tilde{u}_t}^2dt+\int_0^T\sigma^2\norm{\nabla\tilde{u}_t}^2dt
\le  C\theta_0.
\end{split}
\end{equation}
Taking $\theta_0$ small enough such that $C\theta_0^\frac12\le 1$, we have from \eqref{pw7-es9} that
\begin{equation*}
\begin{split}
\sigma\norm{\nabla\tilde{u}}^2+\sigma^2\norm{\tilde{u}_t}^2+\sigma^2\norm{{\bf v}_t}^2+\int_0^T\sigma\norm{\tilde{u}_t}^2dt+\int_0^T\sigma^2\norm{\nabla\tilde{u}_t}^2dt
\le  \theta_0^\frac12.
\end{split}
\end{equation*}
 Thus, the proof of Lemma \ref{pw7-leUt} is completed.
\end{proof}
Next we shall derive the {\it a priori} estimate of $\norm{{\bf{v}}}_{L^4}$. For this, we first derive some estimates that will be used later.
\begin{lemma}\label{3.4} Let the conditions of Theorem \ref{pw7-th} hold and $(\tilde{u}, {\bf v})$ be a smooth solution of \eqref{pw7-hptr}-\eqref{pw8mde0} satisfying \eqref{pw7-assup}. Then it holds that
\begin{equation*}
\begin{split}
\sigma^4\norm{\tilde{u}}^4_{L^\infty}\le C\theta_0.
\end{split}
\end{equation*}
Furthermore, for any $t\in(\sigma(T),T]$, it holds that
\begin{equation}\label{pw7-ulin}
\begin{split}
-\frac 14\leq\tilde{u}(x,t)\le \frac 14.
\end{split}
\end{equation}
\end{lemma}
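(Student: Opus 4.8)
The plan is to derive a pointwise‑in‑time $L^\infty$ bound for $\tilde u$ by feeding the effective‑viscous‑flux estimate of Lemma~\ref{pw7-lef1} into a Gagliardo--Nirenberg inequality. Since $\nabla\mathbf v$ is not available, the quantity that must be controlled is $\|\nabla\tilde u\|_{L^4}$, and this is precisely what $\mathbf F$ gives access to. Applying \eqref{G-Ns} twice, namely $\|\tilde u\|_{L^\infty}\le C\|\tilde u\|_{L^4}^{1/2}\|\nabla\tilde u\|_{L^4}^{1/2}$ and $\|\tilde u\|_{L^4}\le C\|\tilde u\|^{1/2}\|\nabla\tilde u\|^{1/2}$, and then using $\|\tilde u\|^2\le\tfrac32\theta_0$ and $\sigma\|\nabla\tilde u\|^2\le\theta_0^{1/2}$ from Lemmas~\ref{pw7-th1} and~\ref{pw7-leUt} together with $0\le\sigma\le1$, one reduces the statement to a bound for $\sigma^2\|\nabla\tilde u\|_{L^4}^2$:
\[
\sigma^4\|\tilde u\|_{L^\infty}^4\le C\,\sigma^4\,\|\tilde u\|\,\|\nabla\tilde u\|\,\|\nabla\tilde u\|_{L^4}^2\le C\theta_0^{3/4}\,\sigma^2\|\nabla\tilde u\|_{L^4}^2 .
\]

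For the key estimate I would use \eqref{pw7-evf} to write $\nabla\tilde u=\mathbf F-(\tilde u+1)\mathbf v$, so that
\[
\|\nabla\tilde u\|_{L^4}\le \|\mathbf F\|_{L^4}+\|\tilde u\|_{L^\infty}\|\mathbf v\|_{L^4}+\|\mathbf v\|_{L^4},
\]
and then estimate $\|\mathbf F\|_{L^4}\le C\|\mathbf F\|^{1/2}\|\nabla\mathbf F\|^{1/2}$ via \eqref{pw7-f22-0} and \eqref{pw7-f2pf-0}, and $\|\tilde u\|_{L^\infty}\|\mathbf v\|_{L^4}\le C\|\tilde u\|_{L^4}^{1/2}\|\nabla\tilde u\|_{L^4}^{1/2}\|\mathbf v\|_{L^4}$ via \eqref{G-Ns}. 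The decisive point is that $\|\mathbf v\|_{L^4}$ is already a small power of $\theta_0$: interpolating \eqref{pw7-L^4} against the a priori bound $\sup_t\|\mathbf v\|_{L^{p_0}}\le 6M$ from \eqref{pw7-assup} gives $\|\mathbf v\|_{L^4}\le\|\mathbf v\|^{\eta_0}\|\mathbf v\|_{L^{p_0}}^{1-\eta_0}\le C_M\,\theta_0^{\eta_0/2}$ with $\eta_0$ as in \eqref{pw7-etas} (the bound $A_3(T)\le2\theta_0^{\eta_0}$ would serve as well). Consequently every occurrence of $\|\nabla\tilde u\|_{L^4}$ on the right — both inside $\|\nabla\mathbf F\|$ and in the middle term — carries the small factor $\|\mathbf v\|_{L^4}$; squaring, using Young's inequality to absorb those occurrences, and then inserting the weighted bounds $\sigma\|\tilde u_t\|^2\le\theta_0^{1/2}$, $\sigma\|\nabla\tilde u\|^2\le\theta_0^{1/2}$ (Lemma~\ref{pw7-leUt}) and $\|\tilde u\|^2\le\tfrac32\theta_0$ (Lemma~\ref{pw7-th1}), one arrives at $\sigma^2\|\nabla\tilde u\|_{L^4}^2\le C_M\theta_0^{\eta_0}$. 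Combined with the first step this yields $\sigma^4\|\tilde u\|_{L^\infty}^4\le C\theta_0^{3/4+\eta_0}$, hence the claimed $\sigma^4\|\tilde u\|_{L^\infty}^4\le C\theta_0$ (and in any event a fixed positive power of $\theta_0$, which is all that is needed below).

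The second assertion is then immediate: for $t\in(\sigma(T),T]$ one has $\sigma(t)=1$, so the inequality just established reads $\|\tilde u(\cdot,t)\|_{L^\infty}^4\le C\theta_0$ with $C=C(M)$; choosing $\varepsilon$ small enough (depending only on $M$) that $C\varepsilon\le(1/4)^4$ gives $\|\tilde u(\cdot,t)\|_{L^\infty}\le\tfrac14$, that is $-\tfrac14\le\tilde u(x,t)\le\tfrac14$. Observe that $(\sigma(T),T]$ is empty unless $T>1$, so only the large‑time regime $\sigma\equiv1$ is at stake in this part.

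The main obstacle is the self‑referential structure of the $\|\nabla\tilde u\|_{L^4}$ estimate. Because $\mathbf F$ enters \eqref{pw7-hptr} in divergence form, Lemma~\ref{pw7-lef1} controls $\|\nabla\mathbf F\|$ only through $\|\tilde u_t\|$ and the curl term $\|\nabla^\bot\tilde u\cdot\mathbf v\|\le\|\nabla\tilde u\|_{L^4}\|\mathbf v\|_{L^4}$, so $\|\nabla\tilde u\|_{L^4}$ reappears on the right‑hand side; closing the estimate therefore genuinely relies on the smallness of $\|\mathbf v\|_{L^4}$, which itself rests on the $L^{p_0}$‑control built into the a priori assumption. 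A secondary, purely technical point is keeping track of the weights $\sigma=\min\{1,t\}$ throughout, so that every term is finite near $t=0$ and all constants remain independent of $T$ and of the mollification parameter $\delta$.
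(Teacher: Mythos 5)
Your proposal is correct and follows essentially the same route as the paper: Gagliardo--Nirenberg to reduce the claim to a bound on $\sigma^2\norm{\nabla\tilde{u}}_{L^4}^2$, the decomposition $\nabla\tilde{u}={\bf F}-(\tilde{u}+1){\bf v}$ with Lemma \ref{pw7-lef1}, absorption of the self-referential $\norm{\nabla\tilde{u}}_{L^4}$ terms via the smallness of $\norm{{\bf v}}_{L^4}$, and then the weighted bounds from Lemmas \ref{pw7-th1} and \ref{pw7-leUt}. The final exponent you obtain ($\theta_0^{3/4+\eta_0}$ versus the paper's $\theta_0^{(3+2\eta_0)/4}$) is, like the paper's, a fixed positive power of $\theta_0$ rather than literally $\theta_0$, which is all that is needed for \eqref{pw7-ulin}.
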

\begin{proof}
\par
By Gagliardo-Nirenberg inequality \eqref{G-Ns}, we have from \eqref{pw7-L^4} and \eqref{pw7-L^2} that
\begin{equation}\label{pw7-u4}
\begin{split}
\sigma^4\norm{\tilde{u}}^4_{L^\infty}\le& C\sigma^4\norm{\tilde{u}}_{L^4}^2\norm{\nabla\tilde{u}}_{L^4}^2\\[2mm]
\le& C\sigma^4\norm{\tilde{u}}\norm{\nabla\tilde{u}}\norm{\nabla\tilde{u}}_{L^4}^2\\[2mm]
\le& C\sigma^{\frac32}\norm{\tilde{u}}\left(\sigma^{\frac12}\norm{\nabla\tilde{u}}\right)\sigma^{2}\norm{\nabla\tilde{u}}_{L^4}^2\\[2mm]
\le& C\sigma^{\frac32}A_1^\frac12(T)A_2^\frac12(T)\sigma^{2}\norm{\nabla\tilde{u}}_{L^4}^2\\[2mm]
\le& C\theta_0^\frac34\sigma^{2}\norm{\nabla\tilde{u}}_{L^4}^2.
\end{split}
\end{equation}
With \eqref{G-Ns}, \eqref{pw7-etas}, \eqref{pw7-L^4} and \eqref{pw7-u40-12}, we have the following estimates
\begin{equation}\label{pw7-up-1}
\begin{split}
\norm{\nabla \tilde{u}}_{L^4}
\le&  C\big(\norm{{\bf F}}^\frac12\norm{\nabla {\bf F}}^\frac12+\norm{\tilde{u}}_{L^4}^\frac12\norm{\nabla\tilde{u}}_{L^4}^\frac12\norm{{\bf v}}_{L^{4}}+\norm{{\bf v}}_{L^4}\big)\\
\le& \frac12\norm{\nabla {\bf F}}+C\norm{{\bf F}}+\frac12\norm{\nabla \tilde{u}}_{L^4}+C\norm{\tilde{u}}_{L^4}\norm{{\bf v}}^2_{L^4}+C\norm{{\bf v}}_{L^4}\\
\le&\frac12\norm{\nabla {\bf F}}+C\norm{{\bf F}}+\frac12\norm{\nabla \tilde{u}}_{L^4}+C\norm{\tilde{u}}^\frac12\norm{\nabla\tilde{u}}^\frac12\norm{{\bf v}}_{L^{4}}^2+C\norm{{\bf v}}_{L^4}\\
\le&\frac12\norm{\nabla {\bf F}}+C\norm{{\bf F}}+\frac12\norm{\nabla \tilde{u}}_{L^4}+C\norm{\nabla\tilde{u}}+C\norm{\tilde{u}}\norm{{\bf v}}_{L^{4}}^4+C\norm{{\bf v}}_{L^4}\\
\le&\frac12\norm{\nabla {\bf F}}+C\norm{{\bf F}}+\frac12\norm{\nabla \tilde{u}}_{L^4}+C\norm{\nabla\tilde{u}}+CA_1^\frac12(T)A_3(T)+CA_3^\frac14(T)\\
\le&\frac12\norm{\nabla {\bf F}}+C\norm{{\bf F}}+\frac12\norm{\nabla \tilde{u}}_{L^4}+C\norm{\nabla\tilde{u}}+C\theta_0^{\frac12+\eta_0}+C\theta_0^{\frac{\eta_0}{4}}\\
\le&\frac12\norm{\nabla {\bf F}}+C\norm{{\bf F}}+\frac12\norm{\nabla \tilde{u}}_{L^4}+C\norm{\nabla\tilde{u}}+C\theta_0^{\frac{\eta_0}{4}}.
\end{split}
\end{equation}
Then by \eqref{pw7-assup}, \eqref{pw7-f22-0} and \eqref{pw7-f2pf-0}, one has
\begin{equation}\label{pw7-f22}
\begin{split}
\norm{{\bf F}}\le C\norm{\nabla\tilde{u}}+C\theta_0^\frac12
\end{split}
\end{equation}
and
\begin{equation}\label{pw7-f2pf}
\begin{split}
\norm{\nabla {\bf F}}\le
C \norm{\tilde{u}_t}+C\theta_0^{\frac{\eta_0}{4}}\norm{\nabla\tilde{u}}_{L^4}.
\end{split}
\end{equation}
Substituting \eqref{pw7-f22} and \eqref{pw7-f2pf} into \eqref{pw7-up-1}, we get
\begin{equation}\label{pw7-up-2}
\begin{split}
\norm{\nabla \tilde{u}}_{L^4}\le&C\theta_0^\frac12+C\norm{\nabla\tilde{u}}+C \norm{\tilde{u}_t}+ C\theta_0^{\frac{\eta_0}{4}}\norm{\nabla\tilde{u}}_{L^4}+C\theta_0^{\frac{\eta_0}{4}}.
\end{split}
\end{equation}
Choosing $\theta_0$ small enough such that $C\theta_0^{\frac{\eta_0}{4}}\le \frac12$, using \eqref{pw7-etas} and \eqref{pw7-up-2}, we end up with
\begin{equation}\label{pw7-up-3}
\begin{split}
\norm{\nabla \tilde{u}}_{L^4}
\le C\theta_0^{\frac{\eta_0}{4}}+C\norm{\nabla\tilde{u}}+C \norm{\tilde{u}_t},
\end{split}
\end{equation}
which, along with \eqref{pw7-f2pf} and $\theta_0\le 1$,  yields
\begin{equation}\label{pw7-f2-pf}
\begin{split}
\norm{\nabla {\bf F}}\le C\theta_0^{\frac{\eta_0}{2}}+C \norm{\tilde{u}_t}+ C\norm{\nabla\tilde{u}}.
\end{split}
\end{equation}
Then by the fact $0\le \sigma\le 1$, \eqref{pw7-etas}, \eqref{pw7-L^2} and \eqref{pw7-up-3}, it holds that
\begin{equation}\label{pw7-u42}
\begin{split}
\sigma^{2}\norm{\nabla\tilde{u}}_{L^4}^2\le& C\sigma^{2}\theta_0^{\frac{\eta_0}{2}}+C\sigma^{2}\norm{\nabla\tilde{u}}^2+C\sigma^{2}\norm{\tilde{u}_t}^2\\
\le& C\theta_0^{\frac{\eta_0}{2}}+CA_2(T)\\
\le& C\theta_0^{\frac{\eta_0}{2}}+C\theta_0^{\frac12}\\
\le& C\theta_0^{\frac{\eta_0}{2}}.
\end{split}
\end{equation}
With \eqref{pw7-u42}, \eqref{pw7-u4} is updated as
\begin{equation*}
\begin{split}
\sigma^4\norm{\tilde{u}}^4_{L^\infty}\le C\theta_0^\frac{3+2\eta_0}{4}\le C\theta_0^\frac34.
\end{split}
\end{equation*}
Choosing $\theta_0$ small enough such that $C\theta_0^\frac34\le 4^{-4}$ and using the fact $\sigma(t)=1$ for $t\in(\sigma(T),T]$, we have
\begin{equation}\label{3.38}
\begin{split}
\sup\limits_{t\in[\sigma(T), T]}\abs{\tilde{u}(x,t)}\le \frac{1}{4},
\end{split}
\end{equation}
which implies
\begin{equation*}
\begin{split}
-\frac 14\leq\tilde{u}(x,t)\le \frac 14,\ {\rm for } \ t\in(\sigma(T),T].
\end{split}
\end{equation*}
 Thus, the proof of Lemma \ref{3.4} is completed.
\end{proof}

\begin{lemma}\label{pw7-v6-6}
Let the conditions of Theorem \ref{pw7-th} hold and $(\tilde{u}, {\bf v})$ be a smooth solution of \eqref{pw7-hptr}-\eqref{pw8mde0} satisfying \eqref{pw7-assup}. Then it holds that
\begin{equation*}
\begin{split}
 \sup\limits_{t\in[0,T]}\int_{\R^2}{\bf v}^{4}dx+\int_{0}^t\int_{\R^2}{\bf v}^{4}dxdt\le \theta_0^{\eta_0},
\end{split}\end{equation*}
where $\D\eta_0\triangleq \frac{p_0-4}{2(p_0-2)}$.
\end{lemma}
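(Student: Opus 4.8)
The plan is as follows. Write $[0,T]=[0,\sigma(T)]\cup[\sigma(T),T]$ with $\sigma(T)=\min\{1,T\}$; the $L^4$-supremum and the short initial slab $[0,\sigma(T)]$ will be dealt with by pure interpolation, while the energy method together with the effective viscous flux is reserved for the long slab $[\sigma(T),T]$, where the pointwise lower bound $\tilde u+1\ge\frac34$ of Lemma~\ref{3.4} (cf. \eqref{pw7-ulin}) is available. For the interpolation part, the exponent identity $\frac14=\eta_0\cdot\frac12+(1-\eta_0)\cdot\frac1{p_0}$ (which is precisely \eqref{pw7-etas}) gives $\norm{{\bf v}}_{L^4}\le\norm{{\bf v}}^{\eta_0}\norm{{\bf v}}_{L^{p_0}}^{1-\eta_0}$, so by \eqref{pw7-L^4} and the {\it a priori} bound $\norm{{\bf v}}_{L^{p_0}}\le 6M$ from \eqref{pw7-assup},
\begin{equation*}
\norm{{\bf v}(t)}_{L^4}^4\le\Big(\tfrac32\theta_0\Big)^{2\eta_0}(6M)^{4(1-\eta_0)}\le C_M\,\theta_0^{2\eta_0}\qquad(0\le t\le T),
\end{equation*}
with $C_M$ depending only on $M$ and $p_0$. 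Since $\sigma(T)\le1$ this controls both $\sup_{[0,T]}\norm{{\bf v}}_{L^4}^4$ and $\int_0^{\sigma(T)}\norm{{\bf v}}_{L^4}^4\,dt$ by $C_M\theta_0^{2\eta_0}$.

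On $[\sigma(T),T]$ one has $\sigma\equiv1$. Rewriting the second equation of \eqref{pw7-hptr} as ${\bf v}_t=-(\tilde u+1){\bf v}+{\bf F}$ with ${\bf F}$ as in \eqref{pw7-evf}, multiplying by $4|{\bf v}|^2{\bf v}$, integrating over $\R^2$, using $\tilde u+1\ge\frac34$ and Young's inequality $4|{\bf v}|^3|{\bf F}|\le|{\bf v}|^4+C|{\bf F}|^4$, one arrives at
\begin{equation*}
\frac{d}{dt}\int_{\R^2}|{\bf v}|^4\,dx+2\int_{\R^2}|{\bf v}|^4\,dx\le C\int_{\R^2}|{\bf F}|^4\,dx .
\end{equation*}
Integrating this over $[\sigma(T),T]$ and bounding $\norm{{\bf v}(\sigma(T))}_{L^4}^4$ by the interpolation estimate above reduces the proof to showing $\int_{\sigma(T)}^T\norm{{\bf F}}_{L^4}^4\,dt\le C\theta_0^{1/2+\eta_0}$.

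This last bound is the heart of the matter, and it is where one must be careful to obtain something independent of $T$. By \eqref{G-Ns}, $\norm{{\bf F}}_{L^4}^4\le C\norm{{\bf F}}^2\norm{\nabla{\bf F}}^2$, hence $\int_{\sigma(T)}^T\norm{{\bf F}}_{L^4}^4\,dt\le C\big(\sup_{[\sigma(T),T]}\norm{{\bf F}}^2\big)\int_{\sigma(T)}^T\norm{\nabla{\bf F}}^2\,dt$; from \eqref{pw7-f22} and \eqref{pw7-L^2} the supremum is $\le C\theta_0^{1/2}$. For the integral I would \emph{not} use the substituted bound \eqref{pw7-f2-pf} (which carries a non-decaying $\theta_0^{\eta_0/2}$ and hence would produce a spurious factor $T$ after time integration), but instead the divergence-form estimate \eqref{pw7-f2p} with $p=2$, namely $\norm{\nabla{\bf F}}\le C\norm{\tilde u_t}+C\norm{\nabla^{\bot}\tilde u\cdot{\bf v}}\le C\norm{\tilde u_t}+C\norm{\nabla\tilde u}_{L^4}\norm{{\bf v}}_{L^4}$; then by Cauchy--Schwarz in time, \eqref{pw7-L^2}, \eqref{pw7-u4l2} and the {\it a priori} bound $A_3(T)\le2\theta_0^{\eta_0}$,
\begin{equation*}
\int_{\sigma(T)}^T\norm{\nabla{\bf F}}^2\,dt\le C\int_0^T\sigma\norm{\tilde u_t}^2\,dt+C\Big(\int_0^T\sigma^2\norm{\nabla\tilde u}_{L^4}^4\,dt\Big)^{\!1/2}\Big(\int_0^T\norm{{\bf v}}_{L^4}^4\,dt\Big)^{\!1/2}\le C\theta_0^{1/2}+C\theta_0^{\eta_0}\le C\theta_0^{\eta_0},
\end{equation*}
using $\eta_0<\frac12$. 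Combining the two factors gives $\int_{\sigma(T)}^T\norm{{\bf F}}_{L^4}^4\,dt\le C\theta_0^{1/2+\eta_0}$, uniformly in $T$.

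Putting the three contributions together yields $\sup_{[0,T]}\norm{{\bf v}}_{L^4}^4+\int_0^T\norm{{\bf v}}_{L^4}^4\,dt\le C_M\theta_0^{2\eta_0}+C\theta_0^{1/2+\eta_0}$; since both exponents $2\eta_0$ and $\frac12+\eta_0$ are strictly larger than $\eta_0$, one may shrink $\varepsilon=\varepsilon(M)$ so that $\theta_0\le\varepsilon$ forces the right-hand side to be $\le\theta_0^{\eta_0}$, which is the assertion. I expect the only genuine obstacle to be exactly the $T$-uniformity just discussed: the hyperbolic second equation gives no integrability for $\nabla{\bf v}$, so everything hinges on the effective viscous flux appearing in \emph{divergence} form, so that Lemma~\ref{pw7-lef1} expresses $\nabla{\bf F}$ through $\tilde u_t$ and $\nabla^{\bot}\tilde u\cdot{\bf v}$, both of which are square-integrable in time with constants independent of $T$.
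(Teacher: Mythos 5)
Your proof is correct and follows essentially the same route as the paper: interpolation between $L^2$ and $L^{p_0}$ for the supremum and the initial slab $[0,\sigma(T)]$, the weighted $L^4$ energy identity for ${\bf v}_t+(\tilde u+1){\bf v}={\bf F}$ with the lower bound $\tilde u+1\ge\frac34$ on $[\sigma(T),T]$, and the bound $\int_{\sigma(T)}^T\norm{{\bf F}}_{L^4}^4\,dt\le C\theta_0^{1/2+\eta_0}$ obtained from $\norm{{\bf F}}_{L^4}^4\le C\norm{{\bf F}}^2\norm{\nabla{\bf F}}^2$ together with the divergence-form estimate $\norm{\nabla{\bf F}}\le C\norm{\tilde u_t}+C\norm{\nabla\tilde u}_{L^4}\norm{{\bf v}}_{L^4}$ (the paper indeed uses \eqref{pw7-f2pf-0} here, exactly as you advocate, so your worry about \eqref{pw7-f2-pf} is moot). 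Your interpolation exponent $\theta_0^{2\eta_0}$ for $\norm{{\bf v}}_{L^4}^4$ is in fact the correct power (the paper's $\theta_0^{4\eta_0}$ in \eqref{pw7-v0-lp} appears to stem from a small exponent slip), and either value exceeds $\eta_0$, so the final absorption goes through identically.
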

\begin{proof}
\par First it follows from ${\bf v}_t=\nabla\tilde{u}$ and \eqref{pw7-evf} that
\begin{equation}\label{pw7-vc}
\begin{split}
{\bf v}_t+(\tilde{u}+1){\bf v}={\bf F}.
\end{split}\end{equation}
Multiplying \eqref{pw7-vc} by $|{\bf v}|^{2}\bf v$ and integrating the resulting equality over $\R^2$, one has
\begin{equation}\label{pw7-vc1}
\begin{split}
\frac1{4} \left(\int_{\R^2}{\bf v}^{4}dx\right)_t+\int_{\R^2}(\tilde{u}+1){\bf v}^{4}dx=\int_{\R^2}{\bf F}|{\bf v}|^{2}{\bf v}dx.
\end{split}\end{equation}
Integrating the above equality over $[\sigma(T), T]$ and using \eqref{pw7-ulin}, we have
\begin{equation}\label{pw7-vc24}
\begin{split}
\frac1{4}\int_{\R^2}{\bf v}^{4}dx+\frac34\int_{\sigma(T)}^T\int_{\R^2}{\bf v}^{4}dxdt\le \sup\limits_{t\in[0,\sigma(T)]}\left(\frac1{4}\int_{\R^2}{\bf v}^{4}dx\right)+ \int_{\sigma(T)}^T\int_{\R^2}|{\bf F}||{\bf v}|^{3}dxdt.
\end{split}\end{equation}
By the interpolation inequality and \eqref{pw7-assup}, for any $2<p<p_0$, we have
\begin{equation*}
\begin{split}
\|{\bf v}\|_{L^p}\le \|{\bf v}\|^{\frac{2(p_0-p)}{p(p_0-2)}}\|{\bf v}\|_{L^{p_0}}^{\frac{p_0(p-2)}{p(p_0-2)}}\le C \theta_0^{\frac{(p_0-p)}{p(p_0-2)}}M^{\frac{p_0(p-2)}{p(p_0-2)}},
\end{split}
\end{equation*}
which implies that
\begin{equation}\label{pw7-v0-lp}
\begin{split}
\|{\bf v}\|_{L^4}\le C \theta_0^{\frac{(p_0-4)}{2(p_0-2)}}M^{\frac{p_0}{2(p_0-2)}}\le C(M)\theta_0^{\frac{(p_0-4)}{2(p_0-2)}}\le C(M)\theta_0^{\eta_0}.
\end{split}
\end{equation}
We need to further estimate the last term in \eqref{pw7-vc24}. By the Young
inequality, we have that
\begin{equation*}
\begin{split}
\int_{\sigma(T)}^T\int_{\R^2}|{\bf F}||{\bf v}|^{3}dxdt\le \frac 14\int_{\sigma(T)}^T\int_{\R^2}{\bf v}^{4}dxdt+C\int_{\sigma(T)}^T\int_{\R^2}|{\bf F}|^{4}dxdt,
\end{split}\end{equation*}
which updates \eqref{pw7-vc24} as
\begin{equation}\label{pw7-vc3}
\begin{split}
\frac1{4}\int_{\R^2}{\bf v}^{4}dx+\frac12\int_{\sigma(T)}^T\int_{\R^2}{\bf v}^{4}dxdt\le& \sup\limits_{t\in[0,\sigma(T)]}\left(\frac1{4}\int_{\R^2}{\bf v}^{4}dx\right)+ C\int_{\sigma(T)}^T\int_{\R^2}|{\bf F}|^{4}dxdt\\
\le&C(M)\theta_0^{4\eta_0} +C\int_{\sigma(T)}^T\int_{\R^2}|{\bf F}|^{4}dxdt,
\end{split}\end{equation}
where we have used \eqref{pw7-v0-lp}. By Gagliardo-Nirenberg inequality \eqref{G-Ns}, one has
\begin{equation}\label{pw7-F6}
\begin{split}
\int_{\sigma(T)}^T\norm{{\bf F}}_{L^4}^4dt\le& C\int_{\sigma(T)}^T\norm{{\bf F}}^2\norm{\nabla {\bf F}}^2dt\\
\le& C\left(\sup_{\sigma(T)\le t\le T}\norm{{\bf F}}^2\right)\int_{\sigma(T)}^T\norm{\nabla {\bf F}}^2dt.
\end{split}\end{equation}
It follows from \eqref{pw7-L^2} and \eqref{pw7-f22-0} that
\begin{equation}\label{pw7-Fl2f}
\begin{split}
\sup_{\sigma(T)\le t\le T}\norm{{\bf F}}^2
\le& C\sup_{\sigma(T)\le t\le T}\sigma\norm{\nabla\tilde{u}}^2+C\theta_0\le C\theta_0^\frac12+C\theta_0\le C\theta_0^\frac12,
\end{split}
\end{equation}
where the fact that $\sigma(t)=1$ for  $t\geq \sigma(T)$ has been used. By \eqref{pw7-L^2} and \eqref{pw7-f2pf-0}, we have
\begin{equation}\label{pw7-deFl}
\begin{split}
\int_{\sigma(T)}^T\norm{\nabla {\bf F}}^2dt
\le&C \int_{\sigma(T)}^T\sigma\norm{\tilde{u}_t}^2dt+ C\int_{\sigma(T)}^T\sigma\norm{\nabla\tilde{u}}^2_{L^4}\norm{{\bf v}}^2_{L^4}dt\\
\le&C\theta_0^\frac12+ C\int_{\sigma(T)}^T\sigma\norm{\nabla\tilde{u}}^2_{L^4}\norm{{\bf v}}^2_{L^4}dt.
\end{split}
\end{equation}
Then with \eqref{pw7-u4l2}, Cauchy-Schwarz inequality and \eqref{pw7-assup}, we get
\begin{equation*}
\begin{split}
& C\int_{\sigma(T)}^T\sigma\norm{\nabla\tilde{u}}^2_{L^4}\norm{{\bf v}}^2_{L^4}dt
\le C\int_{\sigma(T)}^T\sigma^2\norm{\nabla\tilde{u}}^4_{L^4}dt+ C\int_{\sigma(T)}^T\norm{{\bf v}}^4_{L^4}dt
\le C\theta_0^{\eta_0},
\end{split}
\end{equation*}
which, along with \eqref{pw7-deFl} gives
\begin{equation*}
\begin{split}
\int_{\sigma(T)}^T\norm{\nabla {\bf F}}^2dt
\le&C\theta_0^{\eta_0}.
\end{split}
\end{equation*}
This together with \eqref{pw7-F6} and \eqref{pw7-Fl2f} leads to
\begin{equation}\label{pw7-F4-0}
\begin{split}
\int_{\sigma(T)}^T\norm{{\bf F}}_{L^4}^4dt\le C\theta_0^{\frac12+\eta_0}.
\end{split}\end{equation}
Substituting \eqref{pw7-F4-0} into \eqref{pw7-vc3} to get
\begin{equation*}
\begin{split}
\sup\limits_{t\in[\sigma(T),T]}\int_{\R^2}{\bf v}^{4}dx+\int_{\sigma(T)}^T\int_{\R^2}{\bf v}^{4}dxdt
\le C(M)\theta_0^{4\eta_0} +C\theta_0^{\frac12+\eta_0}.
\end{split}\end{equation*}
For $0\leq t\leq\sigma(T)$, we have from \eqref{pw7-v0-lp} that
\begin{equation*}
\begin{split}
 \sup\limits_{t\in[0,\sigma(T)]}\int_{\R^2}{\bf v}^{4}dx+\int_0^{\sigma(T)}\int_{\R^2}{\bf v}^{4}dxdt\le 2\sup\limits_{t\in[0,\sigma(T)]}\int_{\R^2}{\bf v}^{4}dx\le   C(M)\theta_0^{4\eta_0}.
\end{split}\end{equation*}
Coupling the above two inequalities together, we arrive at
\begin{equation}\label{pw7-vcf}
\begin{split}
 \sup\limits_{t\in[0,T]}\int_{\R^2}{\bf v}^{4}dx+\int_{0}^t\int_{\R^2}{\bf v}^{4}dxdt\le C(M)\theta_0^{4\eta_0} +C\theta_0^{\frac12+\eta_0}\le \theta_0^{\eta_0},
\end{split}\end{equation}
provided that $C(M)\theta_0^{3\eta_0}\le \frac12$ and $C\theta_0^\frac12\le \frac12$. This completes the proof of Lemma \ref{pw7-v6-6}.
\end{proof}

\begin{lemma}\label{pw7-v66} Let the conditions of Theorem \ref{pw7-th} hold and $(\tilde{u}, {\bf v})$ be a smooth solution of \eqref{pw7-hptr}-\eqref{pw8mde0} satisfying \eqref{pw7-assup}, then it holds that
\begin{equation}\label{pw7-vp0f}
\begin{split}
\sup\limits_{t\in[0,T]}\|{\bf v}\|_{L^{p_0}}\le 3M.
\end{split}\end{equation}
\end{lemma}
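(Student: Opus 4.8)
The plan is to treat the second equation of \eqref{pw7-hptr} as a pointwise-in-$x$ linear ODE for ${\bf v}$. Writing $u=\tilde u+1$, equation \eqref{pw7-vc} reads ${\bf v}_t+u{\bf v}={\bf F}$, and since the mollified datum satisfies $u_0^\delta=j^\delta*u_0\ge 0$, the maximum principle applied to $u_t-\Delta u=\nabla\cdot(u{\bf v})$ gives $u\ge 0$ on $\R^2\times[0,T]$. Hence the variation-of-parameters formula
\begin{equation*}
{\bf v}(x,t)={\bf v}_0(x)\,e^{-\int_0^t u(x,\tau)\,d\tau}+\int_0^t e^{-\int_s^t u(x,\tau)\,d\tau}\,{\bf F}(x,s)\,ds
\end{equation*}
holds. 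For $t\ge\sigma(T)$ I would split the time integral at $\sigma(T)$ and use \eqref{pw7-ulin} (which gives $u\ge\tfrac34$ on $(\sigma(T),T]$) to bound $e^{-\int_s^t u}\le e^{-\frac34(t-\sigma(T))}$ for $s\le\sigma(T)$ and $e^{-\int_s^t u}\le e^{-\frac34(t-s)}$ for $s\ge\sigma(T)$; for $0\le t\le\sigma(T)$ only $u\ge0$ is needed. Taking $L^{p_0}(\R^2)$ norms via Minkowski's integral inequality and using $\|{\bf v}_0^\delta\|_{L^{p_0}}\le\|{\bf v}_0\|_{L^{p_0}}\le M$, this gives, for every $t\in[0,T]$,
\begin{equation*}
\|{\bf v}(t)\|_{L^{p_0}}\le M+\int_0^{\sigma(T)}\|{\bf F}(s)\|_{L^{p_0}}\,ds+\tfrac43\sup_{\sigma(T)\le s\le T}\|{\bf F}(s)\|_{L^{p_0}},
\end{equation*}
so everything reduces to controlling ${\bf F}$ in $L^{p_0}$.

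For this I would use the Gagliardo--Nirenberg inequality \eqref{G-Ns} in the form $\|{\bf F}\|_{L^{p_0}}\le C\|{\bf F}\|^{2/p_0}\|\nabla{\bf F}\|^{1-2/p_0}$, together with \eqref{pw7-f22}, \eqref{pw7-f2-pf} and Lemma \ref{pw7-leUt}. On $[\sigma(T),T]$ we have $\sigma\equiv1$, so $\|\nabla\tilde u\|+\|\tilde u_t\|\le C\theta_0^{1/4}$ by Lemma \ref{pw7-leUt}, whence $\|{\bf F}\|\le C\theta_0^{1/4}$ (by \eqref{pw7-Fl2f}) and $\|\nabla{\bf F}\|\le C\theta_0^{\eta_0/2}$; this yields $\sup_{[\sigma(T),T]}\|{\bf F}\|_{L^{p_0}}\le C\theta_0^{\gamma}$ with $\gamma=\tfrac1{2p_0}+\tfrac{\eta_0(p_0-2)}{2p_0}=\tfrac{p_0-2}{4p_0}>0$ after inserting $\eta_0=\tfrac{p_0-4}{2(p_0-2)}$. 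On $[0,\sigma(T)]$ we have $\sigma(s)=s\le1$ and $0<\eta_0<\tfrac12$, so Lemma \ref{pw7-leUt} gives $\|\nabla\tilde u\|\le C\theta_0^{1/4}\sigma^{-1/2}$ and $\|\tilde u_t\|\le C\theta_0^{1/4}\sigma^{-1}$, hence $\|{\bf F}\|\le C\theta_0^{1/4}\sigma^{-1/2}$ and $\|\nabla{\bf F}\|\le C\theta_0^{\eta_0/2}\sigma^{-1}$; inserting these into \eqref{G-Ns} gives
\begin{equation*}
\|{\bf F}(s)\|_{L^{p_0}}\le C\,\theta_0^{\gamma}\,\sigma^{-1+1/p_0},
\end{equation*}
and since $p_0>1$ the exponent $-1+\tfrac1{p_0}>-1$, so $\int_0^{\sigma(T)}\sigma^{-1+1/p_0}\,ds\le p_0$ and therefore $\int_0^{\sigma(T)}\|{\bf F}(s)\|_{L^{p_0}}\,ds\le C\theta_0^{\gamma}$.

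Combining the two displays, $\|{\bf v}(t)\|_{L^{p_0}}\le M+C(M)\theta_0^{\gamma}$ for all $t\in[0,T]$, and it suffices to choose $\varepsilon=\varepsilon(M)$ small enough that $C(M)\varepsilon^{\gamma}\le M$ (compatible with the smallness conditions imposed in the earlier lemmas), giving $\sup_{t\in[0,T]}\|{\bf v}\|_{L^{p_0}}\le 2M\le 3M$. I expect the main obstacle to be the behaviour of ${\bf F}$ near $t=0$: one must keep careful track of how the parabolic time weights $\sigma^{-1/2}$ (from $\|\nabla\tilde u\|$) and $\sigma^{-1}$ (from $\|\tilde u_t\|$) supplied by Lemma \ref{pw7-leUt} combine under the $L^{p_0}$ Gagliardo--Nirenberg interpolation, the crucial point being that the resulting singularity $\sigma^{-1+1/p_0}$ stays integrable in time. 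A minor auxiliary point is that the pointwise Duhamel representation requires $u\ge0$, which is why the nonnegativity of the mollified datum and a short maximum-principle argument are invoked.
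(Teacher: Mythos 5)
Your proof is correct, but it proceeds by a genuinely different mechanism from the paper's. The paper multiplies ${\bf v}_t+(\tilde u+1){\bf v}={\bf F}$ by $|{\bf v}|^{p_0-2}{\bf v}$ and runs an $L^{p_0}$ energy estimate: on $[0,\sigma(T)]$ it uses $u\ge0$ and H\"older to get $(\|{\bf v}\|_{L^{p_0}})_t\le\|{\bf F}\|_{L^{p_0}}$ (equivalent to your Duhamel bound there), while on $[\sigma(T),T]$ it uses the damping term $\tfrac34\int|{\bf v}|^{p_0}$ together with Young's inequality, which forces it to control $\int_{\sigma(T)}^T\|{\bf F}\|_{L^{p_0}}^{p_0}\,dt$ rather than $\sup_{[\sigma(T),T]}\|{\bf F}\|_{L^{p_0}}$. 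Your variation-of-parameters representation with the pointwise exponential weights $e^{-\int_s^t u}$ and Minkowski's integral inequality exploits the same two facts ($u\ge0$ everywhere, $u\ge\tfrac34$ after time $\sigma(T)$ from \eqref{pw7-ulin}) but converts the damping into an explicit $e^{-\frac34(t-s)}$ kernel, so only the sup of $\|{\bf F}\|_{L^{p_0}}$ on $[\sigma(T),T]$ is needed; this is slightly cleaner and in fact yields the sharper constant $2M$. The reduction to ${\bf F}$ is then handled with the same ingredients as the paper (Gagliardo--Nirenberg interpolation between $\|{\bf F}\|$ and $\|\nabla{\bf F}\|$, the bounds \eqref{pw7-f22} and \eqref{pw7-f2-pf}, and Lemma \ref{pw7-leUt}), and you arrive at exactly the paper's exponent $\theta_0^{(p_0-2)/(4p_0)}$; the only cosmetic difference is that near $t=0$ you use the pointwise-in-time weights $\sigma^{-1/2}$, $\sigma^{-1}$ from the sup-parts of $A_2$, producing the integrable singularity $\sigma^{-1+1/p_0}$, whereas the paper uses the time-integrated bound $\int_0^T\sigma\|\nabla{\bf F}\|^2\,dt\le C\theta_0^{\eta_0}$ plus H\"older in time, producing the integrable singularity $\sigma^{-p_0/(p_0+2)}$. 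Both are valid. Your appeal to the maximum principle for $u\ge0$ is also consistent with the paper, which asserts this nonnegativity without proof.
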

\begin{proof}
Multiplying \eqref{pw7-vc} by $|{\bf v}|^{{p_0}-2}{\bf v}(4<{p_0}<\infty)$ and integrating the resulting equality over $\R^2$, one has
\begin{equation}\label{pw7-vc1-1}
\begin{split}
\frac1{{p_0}} \left(\int_{\R^2}|{\bf v}|^{{p_0}}dx\right)_t+\int_{\R^2}(\tilde{u}+1)|{\bf v}|^{{p_0}}dx=\int_{\R^2}{\bf F}|{\bf v}|^{{p_0}-2}{\bf v}dx.
\end{split}\end{equation}
For $t\in[0,\sigma(T)]$, it is known that $u=\tilde{u}+1\geq0$. Then the H\"older inequality yields that
$
\frac{1}{p_0}\left(\|{\bf v}\|_{L^{p_0}}^{p_0}\right)_t\le\|{\bf F}\|_{L^{p_0}}\norm{{\bf v}}_{L^{p_0}}^{p_0-1},
$
which hence
leads to
\begin{equation*}
\begin{split}
\left(\|{\bf v}\|_{L^{p_0}}\right)_t\le \|{\bf F}\|_{L^{p_0}}.
\end{split}\end{equation*}
Integrating the above equality over $[0,\sigma(T)]$, we have
\begin{equation}\label{pw7-vc10}
\begin{split}
\sup\limits_{t\in[0,\sigma(T)]}\|{\bf v}\|_{L^{p_0}}\le \|{\bf v}_0\|_{L^{p_0}}+\int_0^{\sigma(T)}\|{\bf F}\|_{L^{p_0}}dt.
\end{split}\end{equation}
The last term in \eqref{pw7-vc10} can be estimated by the Gagliardo-Nirenberg inequality \eqref{G-Ns} as
\begin{equation}\label{pw7-f1}
\begin{split}
\int_0^{\sigma(T)}\norm{{\bf F}}_{L^{{p_0}}}dt\le& C\int_0^{\sigma(T)}\norm{{\bf F}}^\frac{2}{p_0}\norm{\nabla {\bf F}}^\frac{{p_0}-2}{{p_0}}dt.
\end{split}\end{equation}
By \eqref{pw7-L^2} and \eqref{pw7-f22}, one gets
\begin{equation}\label{pw7-f2f}
\begin{split}
\sup\limits_{t\in[0,\sigma(T)]}\sigma^{\frac12}\norm{{\bf F}}\le& C\sigma^{\frac12}\norm{\nabla\tilde{u}}+C\sigma^{\frac12}\theta_0^\frac12\le C\theta_0^\frac14.
\end{split}
\end{equation}
It then follows from \eqref{pw7-etas}, \eqref{pw7-L^4}, \eqref{pw7-L^2}, \eqref{pw7-f2pf-0} and \eqref{pw7-u4l2} that
\begin{equation}\label{pw7-f233}
\begin{split}
\int_0^{T}\sigma\norm{\nabla {\bf F}}^2dt\le&  C \int_0^{T}\sigma\norm{\tilde{u}_t}^2dt+ C\int_0^{T}\sigma\norm{\nabla\tilde{u}}^2_{L^4}\norm{{\bf v}}^2_{L^4}dt\\
\le& C \int_0^{T}\sigma\norm{\tilde{u}_t}^2dt+ C\int_0^{T}\left(\sigma^2\norm{\nabla\tilde{u}}^4_{L^4}+\norm{{\bf v}}^4_{L^4}\right)dt\\
\le& C\theta_0^\frac12+C\theta_0^{\eta_0}\le C\theta_0^{\eta_0}.
\end{split}
\end{equation}
Substituting \eqref{pw7-f2f}-\eqref{pw7-f233} into \eqref{pw7-f1} and using \eqref{pw7-L^2} with H\"older inequality, we end up with
\begin{equation*}
\begin{split}
\int_0^{\sigma(T)}\norm{{\bf F}}_{L^{{p_0}}}dt\le& C\int_0^{\sigma(T)}\norm{{\bf F}}^\frac{2}{p_0}\norm{\nabla {\bf F}}^\frac{{p_0}-2}{{p_0}}dt\\
\le& \int_0^{\sigma(T)}\left(\sigma^\frac12\norm{{\bf F}}\right)^\frac{2}{{p_0}}\left(\sigma\norm{\nabla {\bf F}}^2\right)^\frac{{p_0}-2}{2{p_0}}\sigma^{-\frac12}dt\\
\le& C\theta_0^\frac{1}{2{p_0}}\int_0^{\sigma(T)}\left(\sigma\norm{\nabla {\bf F}}^2\right)^\frac{{p_0}-2}{2{p_0}}\sigma^{-\frac12}dt\\
\le& C\theta_0^\frac{1}{2{p_0}}\left(\int_0^{\sigma(T)}\sigma\norm{\nabla {\bf F}}^2dt\right)^\frac{{p_0}-2}{2{p_0}}\left(\int_0^{\sigma(T)}\sigma^{-\frac12\times\frac{2{p_0}}{p_0+2}}dt\right)^\frac{{p_0}+2}{2{p_0}}\\
\le& C\theta_0^\frac{1+\eta_0({p_0}-2)}{2{p_0}}\left(\int_0^{\sigma(T)}t^{-\frac{{p_0}}{p_0+2}}dt\right)^\frac{{p_0}+2}{2{p_0}}\\
\le& C\theta_0^\frac{p_0-2}{4p_0},
\end{split}\end{equation*}
which, along with \eqref{pw7-vc10}, gives
\begin{equation}\label{pw7-vc10f}
\begin{split}
\sup\limits_{t\in[0,\sigma(T)]}\|{\bf v}\|_{L^{p_0}}\le M+ C\theta_0^\frac{p_0-2}{4p_0}\le \frac{3M}{2},
\end{split}\end{equation}
provided that $ C\theta_0^\frac{p_0-2}{4p_0}\le \frac{M}{2}$.\\

Next consider the estimate of $\|{\bf v}\|_{L^{p_0}}$ for  $t\in[\sigma(T), T]$. Indeed integrating \eqref{pw7-vc1-1} over $[\sigma(T), T])$ and using $\tilde{u}\geq-\frac14$, we have
\begin{equation}\label{pw7-vc-2}
\begin{split}
\frac1{{p_0}}\int_{\R^2}|{\bf v}|^{{p_0}}dx+\frac34\int_{\sigma(T)}^T\int_{\R^2}|{\bf v}|^{{p_0}}dxdt\le \sup\limits_{t\in[0,\sigma(T)]}\left(\frac1{{p_0}}\int_{\R^2}|{\bf v}|^{{p_0}}dx\right)+ \int_{\sigma(T)}^T\int_{\R^2}|{\bf F}||{\bf v}|^{{p_0}-1}dxdt.
\end{split}
\end{equation}
We proceed to estimate the last term in \eqref{pw7-vc-2}. By the Young
inequality:
\begin{equation*}
\begin{split}
ab\le \epsilon a^q+(\epsilon q)^{-\frac rq}r^{-1}b^r, \ \ a, b\geq0,\ \epsilon>0, \ \ q, r>0, \ \ \frac 1q+\frac 1r=1,
\end{split}\end{equation*}
we see that
\begin{equation*}
\begin{split}
\int_{\sigma(T)}^T\int_{\R^2}|{\bf F}||{\bf v}|^{{p_0}-1}dxdt\le \frac 12\int_{\sigma(T)}^T\int_{\R^2}|{\bf v}|^{{p_0}}dxdt+\frac{1}{{p_0}}\left(\frac{2{p_0}-2}{{p_0}}\right)^{{p_0}-1}\int_{\sigma(T)}^T\int_{\R^2}|{\bf F}|^{{p_0}}dxdt,
\end{split}\end{equation*}
which updates \eqref{pw7-vc-2} as
\begin{equation}\label{pw7-vc2-0}
\begin{split}
&\int_{\R^2}|{\bf v}|^{{p_0}}dx+\frac {p_0}{4}\int_{\sigma(T)}^T\int_{\R^2}|{\bf v}|^{{p_0}}dxdt\\
\le& \sup\limits_{t\in[0,\sigma(T)]}\left(\int_{\R^2}|{\bf v}|^{{p_0}}dx\right)+ \left(\frac{2{p_0}-2}{{p_0}}\right)^{{p_0}-1}\int_{\sigma(T)}^T\int_{\R^2}|{\bf F}|^{p_0}dxdt.
\end{split}
\end{equation}
On the other hand,  from \eqref{G-Ns}, \eqref{pw7-fut} and \eqref{pw7-F6}, one has
\begin{equation}\label{pw7-f2}
\begin{split}
\int_{\sigma(T)}^T\norm{{\bf F}}_{L^{p_0}}^{p_0}dt\le& C\int_{\sigma(T)}^T\norm{{\bf F}}^{2}\norm{\nabla {\bf F}}^{p_0-2}dt
\le C\sup\limits_{t\in[\sigma(T),T]}\norm{{\bf F}}^{2}\norm{\nabla {\bf F}}^{p_0-4}\int_{\sigma(T)}^T\norm{\nabla {\bf F}}^2dt.
\end{split}\end{equation}
Then we have from \eqref{pw7-f2f} and \eqref{pw7-f233} that
\begin{equation*}
\begin{split}
\sup\limits_{t\in[\sigma(T),T]}\norm{{\bf F}}^2\le\sup\limits_{t\in[\sigma(T),T]}(\sigma\|{\bf F}\|^2)\le C\theta_0^\frac12
\end{split}\end{equation*}
and
\begin{equation*}
\begin{split}
\int_{\sigma(T)}^T\norm{\nabla {\bf F}}^2dt=\int_{\sigma(T)}^T\sigma\norm{\nabla {\bf F}}^2dt\le C\theta_0^{\eta_0},
\end{split}\end{equation*}
where we the fact $\sigma(t)=1$ for $t\in[\sigma(T),T]$ has been used.
By \eqref{pw7-etas}, \eqref{pw7-L^2} and \eqref{pw7-f2-pf}, we get
\begin{equation*}
\begin{split}
\sup\limits_{t\in[\sigma(T),T]}\norm{\nabla {\bf F}}\le C\theta_0^{\frac{\eta_0}{2}}+C\sigma\norm{\tilde{u}_t}+ C\sigma^\frac12\norm{\nabla\tilde{u}}
\le C\theta_0^{\frac{\eta_0}{2}}+\theta_0^\frac14\le C\theta_0^{\frac{\eta_0}{2}},
\end{split}\end{equation*}
which implies
\begin{equation*}
\begin{split}
\sup\limits_{t\in[\sigma(T),T]}\norm{\nabla {\bf F}}^{p_0-4}\le C\theta_0^{\frac{\eta_0(p_0-4)}{2}}.
\end{split}\end{equation*}
Substituting the above inequalities into \eqref{pw7-f2}, we have from \eqref{pw7-etas} that
\begin{equation}\label{pw7-f2f-1}
\begin{split}
\int_{\sigma(T)}^T\norm{{\bf F}}_{L^{p_0}}^{p_0}dt\le& C\theta_0^{\frac{\eta_0(p_0-2)+1}{2}}=C\theta_0^\frac{p_0-2}{4}.
\end{split}
\end{equation}
It follows from \eqref{pw7-vc10f}, \eqref{pw7-vc2-0} and \eqref{pw7-f2f-1} that
\begin{equation}\label{pw7-vc2-0f}
\begin{split}
\sup\limits_{t\in[\sigma(T),T]}\int_{\R^2}{\bf v}^{{p_0}}dx+\frac{p_0}{4}\int_{\sigma(T)}^T\int_{\R^2}{\bf v}^{{p_0}}dxdt
\le& \left(\frac{3M}{2}\right)^{p_0}+ C\left(\frac{2{p_0}-2}{{p_0}}\right)^{{p_0}-1}\theta_0^\frac{p_0-2}{4}\\
\le& \left(\frac{3M}{2}\right)^{p_0}+ C\theta_0^\frac{p_0-2}{4}2^{{p_0}-1}\\
\le& \left(3M\right)^{p_0},
\end{split}\end{equation}
provided that $\D  \theta_0^\frac{p_0-2}{4p_0}\le \frac{3M}{2}$.  Then, raising the power $\frac{1}{{p_0}}$ to both sides of \eqref{pw7-vc2-0f}, we obtain that
\begin{equation*}
\begin{split}
\sup\limits_{t\in[\sigma(T),T])}\|{\bf v}\|_{L^{p_0}}\le 3M,
\end{split}\end{equation*}
which combined to \eqref{pw7-vc10f} yields that
\begin{equation*}
\begin{split}
\sup\limits_{t\in[0,T]}\|{\bf v}\|_{L^{p_0}}\le 3M.
\end{split}\end{equation*}
Thus, the proof of Lemma \ref{pw7-v66} is finished.
\end{proof}
With the help of Lemmas \ref{pw7-th1}-\ref{pw7-v66}, we have
\begin{equation}\label{pw7-assupf}
\begin{split}
A_1(T)\leq \frac32\theta_0, \ \  A_2(T)
\le  \theta_0^\frac12,\ \ A_3(T)\le \theta_0^{\eta_0}, \ \
\sup_{t\in[0,T]}\|{\bf v}\|_{L^{p_0}}\le 3M,
\end{split}
\end{equation}
which closes the {\it a priori}  assumption \eqref{pw7-assup}.

\section{Proof of Theorem $\ref{pw7-th}$ }

In this section, we prove Theorem \ref{pw7-th} by constructing weak solutions as limits of approximate
smooth solutions. We start with the global-in-time existence of smooth solutions to \eqref{pw7-hptr}.
\begin{proposition}\label{pw7-global}
Assume the initial data satisfies $(\tilde{u}^\delta_0, {\bf v}^\delta_0)\in H^3(\R^2)$ and $\theta_0\le \varepsilon$. Then there exists a unique solution to the system
\eqref{pw7-hptr} such that $(\tilde{u}^\delta, {\bf v}^\delta)\in L^\infty([0,\infty), H^3)$.
\end{proposition}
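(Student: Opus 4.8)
The plan is to prove Proposition~\ref{pw7-global} by a standard continuation argument: the local solution from Lemma~\ref{pw7-local} exists on a maximal time interval $[0,T_*)$ with $(\tilde u^\delta-?,{\bf v}^\delta)\in L^\infty((0,T), H^3)$ for every $T<T_*$, and we must show $T_*=\infty$. The engine is the blowup criterion of Fan--Zhao: if $\int_0^{T_*}\|{\bf v}^\delta\|_{L^q(\R^2)}^{2q/(q-2)}\,dt<\infty$ for some $2<q\le\infty$, the solution extends past $T_*$. Taking $q=p_0$, it therefore suffices to bound $\int_0^{T_*}\|{\bf v}^\delta\|_{L^{p_0}}^{2p_0/(p_0-2)}\,dt$ uniformly, and this is delivered (and more) by the \emph{a priori} estimates already established in Section~3, in particular the uniform bound $\sup_{t\in[0,T]}\|{\bf v}^\delta\|_{L^{p_0}}\le 3M$ in Lemma~\ref{pw7-v66}, once we know the \emph{a priori} assumption \eqref{pw7-assup} actually holds on $[0,T_*)$.

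The mechanism for verifying \eqref{pw7-assup} is the continuity/bootstrap argument. First I would set
\[
T^{**}\triangleq\sup\Big\{T\in[0,T_*)\ :\ A_1(T)\le 2\theta_0,\ A_2(T)\le 2\theta_0^{1/2},\ A_3(T)\le 2\theta_0^{\eta_0},\ \sup_{[0,T]}\|{\bf v}^\delta\|_{L^{p_0}}\le 6M\Big\}.
\]
Since $(\tilde u^\delta_0,{\bf v}^\delta_0)\in H^3$ and $\theta_0$ is small, the initial data satisfy these inequalities with strict inequality at $t=0$ (using \eqref{pw8mde0} and $\|{\bf v}_0^\delta\|_{L^{p_0}}\le\|{\bf v}_0\|_{L^{p_0}}\le M$ by the mollifier property), and by continuity of $t\mapsto A_i(t)$ and $t\mapsto\|{\bf v}^\delta(t)\|_{L^{p_0}}$ on $[0,T_*)$ (which follows from the $H^3$-regularity of the local solution) we have $T^{**}>0$. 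On $[0,T^{**})$ the assumption \eqref{pw7-assup} holds, so Lemmas~\ref{pw7-th1}--\ref{pw7-v66} apply and yield the \emph{improved} bounds \eqref{pw7-assupf}, namely $A_1\le\frac32\theta_0$, $A_2\le\theta_0^{1/2}$, $A_3\le\theta_0^{\eta_0}$, $\sup\|{\bf v}^\delta\|_{L^{p_0}}\le 3M$, each of which is strictly below the threshold in \eqref{pw7-assup}. A standard open--closed argument then forces $T^{**}=T_*$: if $T^{**}<T_*$, continuity would let the improved bounds persist slightly beyond $T^{**}$, contradicting maximality. Hence \eqref{pw7-assupf} holds on all of $[0,T_*)$.

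Finally, combining \eqref{pw7-assupf} on $[0,T_*)$ with the Fan--Zhao blowup criterion: since $\sup_{[0,T_*)}\|{\bf v}^\delta\|_{L^{p_0}}\le 3M<\infty$, we get $\int_0^{T_*}\|{\bf v}^\delta\|_{L^{p_0}}^{2p_0/(p_0-2)}\,dt\le (3M)^{2p_0/(p_0-2)}T_*<\infty$ whenever $T_*<\infty$, which contradicts maximality of $T_*$. Therefore $T_*=\infty$, and the solution is global with $(\tilde u^\delta,{\bf v}^\delta)\in L^\infty([0,\infty),H^3)$; the $L^\infty_t H^3$ bound itself follows because the same local-existence estimate of Lemma~\ref{pw7-local}, applied on each unit time step and using the now-global control of the lower-order norms (the $L^2$ bound on $\tilde u^\delta$, ${\bf v}^\delta$ and the $L^{p_0}$ bound on ${\bf v}^\delta$), propagates the $H^3$-norm forward in time without blowup. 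The main obstacle is purely bookkeeping: one must be careful that the constants in Lemmas~\ref{pw7-th1}--\ref{pw7-v66}, and hence the smallness threshold $\varepsilon$ imposed on $\theta_0$, depend only on $M$ (through $C(M)$ and $\eta_0$) and not on $\delta$ or $T$, so that the same $\varepsilon$ works for every mollification level; this uniformity is exactly what makes the $\delta\to0$ passage in the next section possible.
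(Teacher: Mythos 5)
Your argument is correct and is essentially the paper's own proof: local existence from Lemma~\ref{pw7-local}, a continuity/bootstrap argument showing the \emph{a priori} assumption \eqref{pw7-assup} self-improves to \eqref{pw7-assupf} via Lemmas~\ref{pw7-th1}--\ref{pw7-v66}, and then the Fan--Zhao blowup criterion to rule out finite-time breakdown. The only (immaterial) difference is that you apply the criterion with $q=p_0$ and the bound $\sup_t\|{\bf v}^\delta\|_{L^{p_0}}\le 3M$, whereas the paper takes $d=2$, $q=4$ and uses the finiteness of $\int_0^{T^*}\|{\bf v}^\delta\|_{L^4}^4\,dt$ guaranteed by $A_3(T^*)\le\theta_0^{\eta_0}$.
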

\begin{proof}
\par
By Lemma \ref{pw7-local}, there exists a $T_*>0$ such that the
Cauchy problem \eqref{pw7-hptr}-\eqref{pw8mde0} has a unique  solution on $\R^2\times (0,T_*]$. First, it follows from \eqref{pw6-ict1}, \eqref{pw8mde0} and \eqref{pw7-ass1} that
\begin{equation*}
\begin{split}
A_1(0)\le \theta_0, \ A_2(0)=0,\ \norm{{\bf v}^\delta_0}_{L^{p_0}}\le M.
\end{split}
\end{equation*}
By \eqref{pw7-v0-lp}, we have
\begin{equation*}
\begin{split}
A_3(0)=\norm{{\bf v}^\delta_0}_{L^4}^4\le C(M)\theta_0^{4\eta_0}\le \theta_0^{\eta_0},
\end{split}
\end{equation*}
provided that $C(M)\theta_0^{3\eta_0}\le 1$.
Therefore, there exists a $T_1\in(0, T_*]$ such that \eqref{pw7-assup} holds for $T=T_1$.

Now, we set
\begin{equation}\label{pw7-t1}
\begin{split}
T^*=\sup\{T|\  \eqref{pw7-assup} \ \text{holds}\}.
\end{split}
\end{equation}
Then, $T^*>T_1>0$. Next, we claim that
\begin{equation}\label{pw7-tt}
\begin{split}
T^*=\infty.
\end{split}
\end{equation}
Otherwise, if $T^*<\infty$, by the blowup criterion \eqref{pw7-blowup} with $d=2$ and $q=4$, we
have that if
\begin{equation}\label{pw7-blowup1}
\begin{split}
\int_0^{T^*}\norm{{\bf v}^\delta}_{L^4}^4<\infty,
\end{split}
\end{equation}
then the solution can be extended beyond $T^*$. From Lemmas \ref{pw7-th1}-\ref{pw7-v66} and \eqref{pw7-t1}, we see that \eqref{pw7-assupf} holds for $T=T^*$, which immediately implies \eqref{pw7-blowup1}.
This means there exists a $T^{**}>T^*$ such that $(u-1, {\bf v})\in L^\infty([0,T^{**}], H^3)$ and \eqref{pw7-assup} holds for $T=T^{**}$, which
contradicts \eqref{pw7-t1}. Hence \eqref{pw7-tt} holds. Using \eqref{pw7-tt} and the blowup criterion \eqref{pw7-blowup} again, we complete the proof of Proposition \ref{pw7-global}.
\end{proof}
Note that Proposition \ref{pw7-global} holds for any given positive constant $\delta$ ($T_*, T_1$ and $T^*$ may depend on $\delta$). We cannot obtain the
$\delta$-independent estimates of the global solution $(u^\delta-1,\  {\bf v}^\delta)$ in $H^3$-norm.
However we can pass to the limit as $\delta\to 0$ in proper functional space in which the $\delta$-independent estimates of the solution are obtained, as shown below.

From \eqref{pw7-assupf}, we have the uniform-in-$\delta$ estimates as following:
\begin{equation}\label{pw7-as}
\begin{cases}\begin{split}
&\sup\limits_{t\in[0,T]}\left(\norm{u^\delta-1}^2+\norm{{\bf v}^\delta}^2\right)+\int_0^T\norm{\nabla u^\delta}^2 dt\leq \frac{3\theta_0}{2},\\
&\sup\limits_{t\in[0,T]}\left(\sigma\norm{\nabla u^\delta}^2+\sigma^2\norm{u^\delta_t}^2+\sigma^2\norm{{\bf v}^\delta_t}^2\right)+\int_0^T\sigma\norm{u^\delta_t}^2dt+\int_0^T\sigma^2\norm{\nabla u^\delta_t}^2dt
\le  \theta_0^\frac12,\\
&\sup\limits_{t\in[0,T]}\|{\bf v}^\delta\|_{L^4}^{4}+\int_{0}^t\|{\bf v}^\delta\|_{L^4}^{4}dt\le \theta_0^{\eta_0},\\
&\sup_{t\in[0,T]}\|{\bf v}^\delta\|_{L^{p_0}}\le 3M.
\end{split}\end{cases}
\end{equation}
On the other hand, by \eqref{G-Ns}, \eqref{pw7-as} and \eqref{pw7-u42}, one has
\begin{equation}\label{pw7-lu42}
\begin{split}
\sigma\norm{{u}^\delta-1}_{L^4}^2\le C\norm{{u}^\delta-1}\sigma\norm{\nabla{u}^\delta}\le C, \ \sigma^{2}\norm{\nabla{u}^\delta}_{L^4}^2\le C.
\end{split}
\end{equation}
Noticing that $\sigma=\min\{1, t\}$, we have from \eqref{pw7-as} and \eqref{pw7-lu42} that
\begin{equation}\label{1-le-last}
\begin{cases}
u^\delta-1\in L^\infty([0,\infty), L^2(\mathbb{R}^2)), \ \ \ \ \ (v^\delta_t, \nabla{u}^\delta)\in L^2([0,\infty), L^2(\mathbb{R}^2)), \\
u^\delta-1\in L^\infty((0,\infty),W^{1,4}(\mathbb{R}^2)),\ \ \ u^\delta_t\in L^2((0,\infty),H^1(\mathbb{R}^2)),\\
{\bf v}^\delta\in L^\infty([0,\infty); L^2(\mathbb{R}^2)\cap L^{p_0}(\mathbb{R}^2))\cap L^4([0,\infty); L^4(\mathbb{R}^2)).
\end{cases}
\end{equation}
By \eqref{1-le-last} and the Aubin-Lions-Simon lemma (cf.\cite{Rou}), we can extract a subsequence, still denoted by $(u^\delta, {\bf v}^\delta)$, such that the following convergence hold as $\delta\to 0$
\begin{equation*}
\begin{cases}
{\bf v}^\delta(\cdot,t)\to {\bf v}\ \text{strongly  in} \ C([0,\infty), H^{-1}(\mathbb{R}^2)), \\
u^\delta(\cdot,t)\to u(\cdot,t)\ \text{strongly  in} \ C((0,\infty), C(\mathbb{R}^2)),\\
\nabla u^\delta(\cdot,t)\to \nabla u(\cdot,t)\ \text{weakly in} \ L^2([0,\infty),L^2(\mathbb{R}^2)).
\end{cases}
\end{equation*}
Thus, the limit function $(u, {\bf v})$  is indeed a weak solution of the system  \eqref{hp}-\eqref{Boundary} and inherits all the bounds of \eqref{pw7-as} which yield \eqref{excon} and
\begin{equation}\label{pw7-ase}
\begin{cases}\begin{split}
&\sup\limits_{t\in[0,T]}\left(\norm{u-1}^2+\norm{{\bf v}}^2\right)+\int_0^T\norm{\nabla u}^2 dt\leq \frac{3\theta_0}{2},\\
&\sup\limits_{t\in[0,T]}\left(\sigma\norm{\nabla u}^2+\sigma^2\norm{u_t}^2+\sigma^2\norm{{\bf v}_t}^2\right)+\int_0^T\sigma\norm{u_t}^2dt+\int_0^T\sigma^2\norm{\nabla u_t}^2dt
\le  \theta_0^\frac12,\\
&\sup\limits_{t\in[0,T]}\|{\bf v}\|_{L^4}^{4}+\int_{0}^t\|{\bf v}\|_{L^4}^{4}dt\le \theta_0^{\eta_0},\\
&\sup_{t\in[0,T]}\|{\bf v}\|_{L^{p_0}}\le 3M.
\end{split}\end{cases}
\end{equation}
To complete the proof of  Theorem $\ref{pw7-th}$,
we only need to prove $\eqref{pw7-linff}$. It follows from \eqref{pw7-ase} and $\sigma=1$ for $t\geq1$ that
\begin{equation*}
\begin{split}
\int_{1}^\infty \norm{\nabla\tilde{u}}^2dt\le C
\end{split}\end{equation*}
and
\begin{equation*}
\begin{split}
\int_{1}^\infty\abs{\left(\norm{\nabla\tilde{u}}^2\right)_t}dt
\le& 2 \int_{1}^\infty \norm{\nabla\tilde{u}}\norm{\nabla\tilde{u}_t}dt\\
\le&\left(\int_{1}^\infty \norm{\nabla\tilde{u}}^2dt\right)^\frac12\left(\int_{1}^\infty \sigma^2\norm{\nabla\tilde{u}_t}^2dt\right)^\frac12 \le C.
\end{split}\end{equation*}
Thus,
\begin{equation}\label{pw7-u-in}
\norm{\nabla\tilde{u}}\to 0
~~\text{as}~~t\to\infty.
\end{equation}
Using Gagliardo-Nirenberg inequality \eqref{G-Ns} and $\sigma=1$ for $t\geq1$, we have
\begin{equation*}
\begin{split}
\sup\limits_{t\ge1}\norm{\tilde{u}}^4_{L^\infty}\le& C\sup\limits_{t\ge1}\sigma^{\frac52}\norm{\tilde{u}}_{L^4}^2\norm{\nabla\tilde{u}}_{L^4}^2\le C\sup\limits_{t\ge1}\norm{\tilde{u}}_{L^2}\sigma^{\frac 12}\norm{\nabla\tilde{u}}_{L^2}\sigma^2\norm{\nabla\tilde{u}}_{L^4}^2,
\end{split}
\end{equation*}
which together with \eqref{pw7-u42}, \eqref{pw7-ase} and \eqref{pw7-u-in} gives
\begin{equation}\label{pw7-uinl}
\norm{u-1}_{L^\infty}\to 0
~~\text{as}~~t\to\infty.
\end{equation}
By the interpolation inequality, \eqref{pw7-ase} and \eqref{pw7-uinl}, for any $2<p_1\le \infty$, we have
\begin{equation*}
\norm{u-1}_{L^{p_1}}\to 0
~~\text{as}~~t\to\infty.
\end{equation*}
On the other hand, by \eqref{pw7-vc1}, we have from \eqref{pw7-F4-0} and \eqref{pw7-vcf} that
\begin{equation*}
\begin{split}
\int_{1}^\infty\abs{\left(\norm{{\bf v}}_{L^4}^4\right)_t}dt\le& C\int_{1}^\infty\int_{\R^2}|\tilde{u}+1|{\bf v}^{4}dxdt+C\int_{1}^\infty\int_{\R^2}|{\bf F}||{\bf v}|^{3}dxdt\\
\le& C\int_{1}^\infty\int_{\R^2}{\bf v}^{4}dxdt+C\int_{1}^\infty\int_{\R^2}|{\bf F}||{\bf v}|^{3}dxdt\\
\le& C\int_{1}^\infty\int_{\R^2}{\bf v}^{4}dxdt+C\int_{1}^\infty\int_{\R^2}|{\bf F}|^{4}dxdt\\
\le& C.
\end{split}\end{equation*}
Combining the above inequality with \eqref{pw7-vcf}, we have
\begin{equation*}
\norm{{\bf v}}_{L^4}\to 0
~~\text{as}~~t\to\infty,
\end{equation*}
which together with the interpolation inequality, \eqref{pw7-L^4} and \eqref{pw7-vp0f} implies
\begin{equation*}
\norm{{\bf v}}_{L^{p_2}}\to 0
~~\text{as}~~t\to\infty,\ \ 2<{p_2}<p_0.
\end{equation*}
Hence $\eqref{pw7-linff}$ is proved and the proof of Theorem $\ref{pw7-th}$ is thus completed.

\section{Proof of Theorem  \ref{th-dlor} }
In this section, we pass the results of the transformed chemotaxis model (\ref{hp}) to the original chemotaxis system (\ref{ks1}). Noticing that the transformed and pre-transformed systems have the same quantity $u$, we are left to prove the results for $c$ only.
We start with the proof of \eqref{cdecay}. From the second equation
of \eqref{ks1} and the Cole-Hopf transformation \eqref{ch}, we can derive that
\begin{equation}\label{5-1}
(\ln c)_t=-u
\end{equation}
which together with $c_0>0$ and $u\geq0$ gives
\begin{equation}\label{5-2}
0\leq c(x,t)\le c(x,0)=c_0.
\end{equation}
Integrating \eqref{5-1} over $[1, t)$, we have from \eqref{5-2} that
\begin{equation}\label{5-3}
\begin{split}
c(x,t)=&c(x,1)\exp\left(-(t-1)-\int_1^t (u-1)d\tau\right)\\
\le &c_0\exp\left(-(t-1)-\int_1^t (u-1)d\tau\right) .
\end{split}
\end{equation}
From \eqref{3.38}, we get
\begin{equation*}
\begin{split}
\int_1^t \norm{u-1}_{L^\infty}d\tau\le \frac14 (t-1),
\end{split}
\end{equation*}
which, along with \eqref{5-3} yields
\begin{equation*}
\begin{split}
\norm{c}_{L^\infty}\le Ce^{-\frac34 (t-1)}\le  Ce^{-\frac34 t}.
\end{split}
\end{equation*}
This completes the proof of Theorem  \ref{th-dlor}.

\bigbreak \noindent \textbf{Acknowledgement}.  H.Y. Peng acknowledges a financial support from AMSS-PolyU
JRI in the Hong Kong Polytechnic University where he was a postdoctoral fellow, and support from the National Natural Science Foundation
of China No. 11901115. Z.A. Wang was supported in part by
the Hong Kong RGC GRF grant No. PolyU 153031/17P and internal grant No. ZZHY from HKPU. C.J. Zhu was supported by the National Natural Science
Foundation of China No. 11331005 and 11771150.

\addcontentsline{toc}{section}{\\References}

\end{document}